\newcommand{\limess}[1]%
{

\begin{array}[t]{c}
{\rm ess\, lim}\\
{\scriptstyle #1}
\end{array}

}
\newcommand{\supess}[1]%
{

\begin{array}[t]{c}
{\rm ess\, sup}\\
{\scriptstyle #1}
\end{array}

}
\newcommand{\infess}[1]%
{

\begin{array}[t]{c}
{\rm ess\, inf}\\
{\scriptstyle #1}
\end{array}

}
\def\R{{\mathbb R}}
\def\d{\delta}
\def\D{\Delta}
\def\s{\sigma}
\def\l{\lambda}
\def\p{\partial}
\def\O{\Omega}
\def\e{\varepsilon}
\def\v{\varphi}
\def\mc{\mathcal}
\def\ms{\mathscr}
\def\ua{\uparrow}
\def\da{\downarrow}
\numberwithin{equation}{section}
\theoremstyle{definition}
\theoremstyle{definition}
\theoremstyle{plain}
\newtheorem{theorem}{Theorem}[section]
\newtheorem{lemma}{Lemma}[section]
\newtheorem{definition}{Definition}[section]
\begin{document}

\title[Limiting behavior of principal eigenvalues and eigenfunctions]
{Limiting behavior of principal eigenvalues and eigenfunctions for a class of elliptic operators with
degenerate large advection}

\thanks{The authors have been  supported by the Research Grants PID2021-123343NB-I00 and PID2024-155890NB-I00 
of the
Ministry of Science and Innovation of Spain and the Institute of Interdisciplinary Mathematics of Complutense University of Madrid.}

\author{S. Cano-Casanova}
\address{S. Cano-Casanova: ICAI, Comillas Pontifical University, Madrid, Spain}
\email{scano@icai.comillas.edu}
\author{J. L\'{o}pez-G\'{o}mez}
\address{J. L\'{o}pez-G\'{o}mez:
Universidad Complutense de Madrid,
Madrid, Spain}
\email{jlopezgo@ucm.es}

\author{M. Molina-Meyer}
\address{M. Molina-Meyer:
Universidad Carlos III de Madrid,
Madrid, Spain}
\email{mmolinam@math.uc3m.es}

\begin{abstract}
\vskip.5cm
In this paper we study, both numerically and analytically,  the asymptotic behavior of the principal eigenfunction of \eqref{1.1}, normalized by \eqref{1.2}, as $s\ua +\infty$. Based on the numerical computations of this paper, we can prove that, under condition (Hm) bellow, $\v_s$ approximates $1$ and  $\v_s'$ approximates $0$, uniformly in $[-1,1]$, as $s\ua +\infty$. As a byproduct of this result, we can derive the asymptotic behavior of the principal eigenvalue in a one-dimensional situation  not previously covered by \cite{ChLo}  and \cite{PeZh}, as we are working under minimal regularity assumptions on $m(x)$. A recent result of \cite{BWZ}
shows that the principal eigenvalue might oscillate as $s\ua +\infty$ if $m(x)$ is highly oscillatory. Thus, the oscillatory and regularity properties of $m(x)$ might severely affect the asymptotic behavior of $(\l_s,\v_s)$
as $s\ua +\infty$.
\vskip.5cm
\end{abstract}

\subjclass[2020]{34B09, 34D15, 34L15.}

\keywords{Principal eigenfunction. Principal eigenvalue. Degenerate advection. Limiting behavior. Singular perturbations.}
\vspace{0.1cm}

\date{\today}

\maketitle

\setcounter{page}{1}
\section{Introduction}

\noindent In this paper we analyze the limiting  behavior of the principal eigenfunction, $\v_s\equiv \v(s)$, and the principal eigenvalue, $\l_s\equiv \l(s)$, of the linear eigenvalue problem
\begin{equation}
\label{1.1}
\left\{
\begin{array}{ll}
-\v_s''-2sm'(x)\v_s' +c(x)\v_s=\lambda(s) \v_s & \quad \hbox{in}\;\; [-1,1], \\[1ex]
\v_s'(-1)=\v_s'(1)=0,&
\end{array}
\right.
\end{equation}
where $m$ satisfies
\begin{itemize}
\item[(Hm)] $m \in\mathcal{C}^1([-1,1];\R)$, and there is a unique  $x_0\in (-1,1)$ such that
\begin{equation*}
   m(x_0)=\|m\|_\infty = \max_{|x|\leq 1}m(x).
\end{equation*}
Moreover, $m'(x)\geq 0$ for all $x\in [-1,x_0)$, $m'(x)\leq 0$ for all $x\in (x_0,1]$, and $m(x)$ has, at most, finitely many critical points in $(-1,1)$.
\end{itemize}
In \eqref{1.1},  $c\in \mc{C}([-1,1];\R)$ is arbitrary,
though, eventually, we will need to focus attention on some special types of $c$'s. Throughout this paper, the principal eigenfunction is
normalized so that
\begin{equation}
\label{1.2}
\| \v_s \|_\infty =\max_{|x|\leq 1} \v_s(x) =1.
\end{equation}
In a multidimensional context, this problem was addressed by Chen and Lou in \cite{ChLo}, where, in the setting of this paper, it was established that,  when, in addition,  $m''(x_0)<0$ and the unique critical point of $m(x)$ in $[-1,1]$ is $x_0$, then
\begin{equation}
\label{1.3}
  \lim_{s\uparrow \infty}\l(s)=c(x_0).
\end{equation}
Although the theorem of Chen and Lou \cite{ChLo} cannot be applied to cover \eqref{1.1} with  $m(x)$ satisfying (Hm), according to the one-dimensional results of Peng and Zhou \cite{PeZh},  \eqref{1.3} holds under condition (Hm) if, in addition, $m\in\mc{C}^2([-1,1];\R)$. Thus, \eqref{1.3} cannot be inferred from \cite{PeZh} even in the simplest case when (Hm) holds, as (Hm) is imposing minimal regularity assumptions on $m(x)$.
\par
The analysis of the asymptotic behavior of principal eigenvalues when either the diffusion coefficient decays to zero, or the amplitud of the potential $c(x)$ grows to infinity,  had been previously analyzed by
Furter and L\'{o}pez-G\'{o}mez \cite{FLG}, Dancer and L\'{o}pez-G\'{o}mez \cite{DLG},
Cano-Casanova and L\'{o}pez-G\'{o}mez \cite{CCLG}, and L\'{o}pez-G\'{o}mez \cite{LG94,LG96}. But these references were left outside the bibliographies  of \cite{ChLo} and \cite{PeZh}.
According to Lemma 3.1 of \cite{FLG}, the principal eigenvalue of the problem
$$
  \left\{ \begin{array}{ll} -d \D \v_d+ c(x) \v_d =\l(d) \v_d & \quad \hbox{in}\;\; \O, \\
  \v_d =0    & \quad \hbox{on}\;\; \p \O,\end{array}\right.
$$
where $\O$ is a sufficiently smooth subdomain of $\R^N$, $N\geq 1$, satisfies
$$
  \lim_{d\da 0}\l(d)=\min_{\bar\O} c.
$$
Some time later, Dancer and L\'{o}pez-G\'{o}mez \cite{DLG} found out the precise asymptotic expansion of
$\v_d$ and $\l(d)$ as $d\da 0$, for a general class of Shr\"{o}dinger operators including transport terms, sharpening some previous results in quantum mechanics by Mart\'{\i}nez and Rouleux  \cite{MaRo}, Helffer \cite{Hel}, Helffer and  Sj\"{o}strand \cite{HeSj} and Simon \cite{Simon}.
Similarly, thanks to Theorem 3.3 of \cite{LG94}, if $c\geq 0$ and $c^{-1}(0)=\bar\O_0$ for some
smooth subdomain $\O_0$ of $\O$, then, the principal eigenvalue, $\l(s)$, of the problem
$$
  \left\{ \begin{array}{ll} -\D \v_d+ s c(x) \v_d =\l(s) \v_d & \quad \hbox{in}\;\; \O, \\
  \v_d =0    & \quad \hbox{on}\;\; \p \O,\end{array}\right.
$$
satisfies
$$
  \lim_{s\ua \infty}\l(s) =\s[-\D,\O_0],
$$
where $\s[-\D,\O_0]$ stands for the principal eigenvalue of $-\D$ in $\O_0$ under Dirichlet boundary
conditions on $\p\O_0$. This result was sharpened, very substantially, by L\'{o}pez-G\'{o}mez \cite{LG96},
Cano-Casanova and L\'{o}pez-G\'{o}mez  \cite{CCLG} to cover the case of general second
order elliptic operators under non-classical mixed boundary conditions  (see \cite{LG13} and the list of references therein). Some applications of these singular perturbation results to the theory of competing species were delivered by L\'{o}pez-G\'{o}mez in \cite{LG94,LG95}, Cano-Casanova and L\'{o}pez-G\'{o}mez in \cite{CCLG03}, L\'{o}pez-G\'{o}mez and Molina-Meyer in \cite{LGMMa,LGMMb}, as well as in \cite{LG15}.
\par
The main goal of this paper is to analyze the behavior of the principal eigenfunction,
$\v_s$, both numerically and analytically, as $s\ua +\infty$. As a consequence of this analysis,
we can deliver a short self-contained proof of \eqref{1.3} for some special classes of $c$'s under condition (Hm). This (elementary) proof differs, very substantially, from the highly technical and lengthly proofs of \cite{ChLo} and \cite{PeZh}, based on the variational formulation of the problem associated to the re-scaled function
$$
  w_s(x):= e^{sm(x)}\psi_s(x),\qquad |x|\leq 1,
$$
where $\psi_s$ is a positive solution of \eqref{1.1}. As in \cite{ChLo} and \cite{PeZh} the function $w_s$ is  normalized to satisfy
$$
  \int_{-1}^1 w_s^2(x)\,dx =1,
$$
necessarily,
$$
   \int_{-1}^1 e^{2sm(x)}\psi_s^2(x)\,dx =1\;\;\hbox{for all}\;\; s>0.
$$
Therefore, since one can assume that $m(x)>0$ for all $x\in [-1,1]$, because the setting of \eqref{1.1} only involves $m'\in\mc{C}([-1,1];\R)$, the eigenfunctions  $\psi_s(x)$ should approximate zero as $s\ua+\infty$, which is incompatible with \eqref{1.2}. Such a crucial difference between $\v_s$ and $\psi_s$ makes the analysis of this paper complementary with respect to that of \cite{ChLo} and \cite{PeZh}.
\par
Subsequently, for every integer $n=2\nu \geq 2$, we considered the sequence of functions
\begin{equation}
\label{1.4}
  m(x)=1-x^n,\quad |x|\leq 1,
\end{equation}
which satisfy (Hm). For this choice, the degree of degeneracy at $x_0=0$ can be arbitrarily large, and, actually,
$$
  \lim_{n\ua+\infty}(1-x^n)= 1 \quad \;\; \hbox{if}\;\; |x|<1.
$$
Figure \ref{Fig1} shows the corresponding eigenfunctions, $\v_s$, normalized by \eqref{1.2}, as well as their derivatives, $\v_s'$, and the plot of $\l(s)-c(0)$, for a series of values of $s$ ranging from $s=10^{-2}$ up to $s=10^4$, for the choice
\begin{equation}
\label{1.5}
   c(x) = 2 + x,\quad m(x) = 1 - x^4,\qquad |x|\leq 1,
\end{equation}
i.e. $n=4$ in \eqref{1.4}. As the remaining eigenfunctions of this section, they have have been calculated through a pseudospectral method that will be discussed in Section 5.

\begin{figure}[h!]
	\centering
	\includegraphics[scale=0.21]{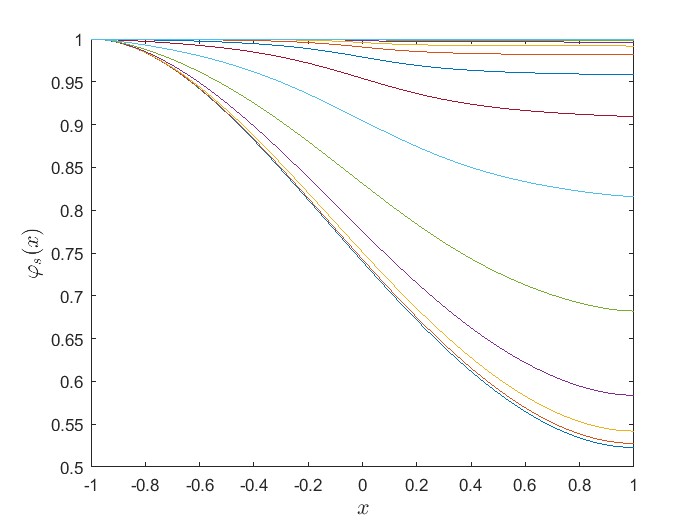}
	\includegraphics[scale=0.21]{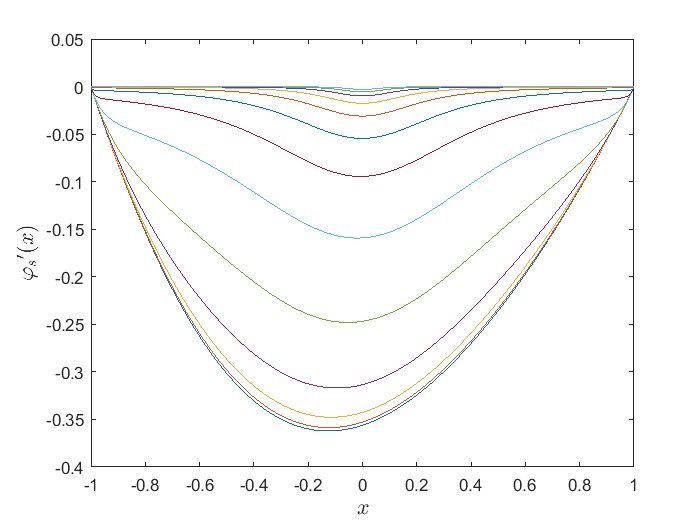}
	\includegraphics[scale=0.21]{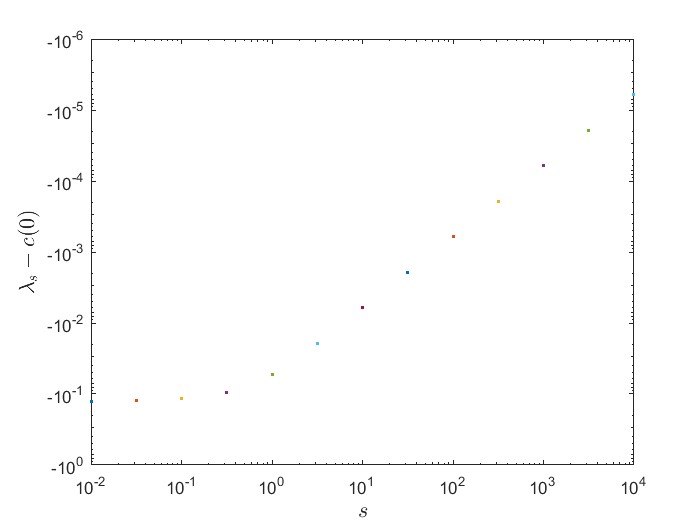}
	\caption{Plots of the normalized eigenfunctions $\v_s$ (left), their derivatives $\v_s'$ (center), and the graph of $\l_s-c(0)$, with $\l_s=\l(s)$,  (right) for a series of values of $s\in [10^{-2},10^4]$, with the  choice \eqref{1.5}.}
	\label{Fig1}
\end{figure}
\par
 As illustrated by the numerical simulations, the eigenfunctions are decreasing for all
value of $s$ for which we have computed them, and stabilize to $1$ as $s\ua +\infty$, while their derivatives
approximate $0$ as $s$ grows. Moreover,
$$
  \lim_{s\ua \infty}\l(s)=2=c(0).
$$
Thus, \eqref{1.3} holds with $x_0=0$, however
$$
   m''(0)=m'''(0)=0.
$$
Figure \ref{Fig2} shows the computed plots of $\v_s$, $\v_s'$, and the corresponding values of $\l(s)-c(0)$ for the choice
\begin{equation}
\label{1.6}
c(x) = 40\left(2-\cos(x-\tfrac{1}{3})\right),\quad  m(x) = 1 - x^4,\quad |x|\leq 1.
\end{equation}
Although for this choice, for an initial range of $s,$ $\v'_s$ changes sign and so $\v_s$ increases in the left side and decreases in  the right part of the interval,  for sufficiently large $s$, $\v_s$ and $\v'_s$ behave much like the corresponding $\v_s$ and $\v'_s$ for the choice \eqref{1.5}, the behavior of $\l(s)$ is rather different, as, according to Figure
\ref{Fig1}, it is apparent that $\l(s)$ is increasing with respect to $s$ for sufficiently large $s$, while, for the choice
\eqref{1.6}, $\l(s)$ decays as $s\ua \infty$ to the value
$c(0)=42.2017$, as expected, though $\l(s)$ is far from being always decreasing, as illustrated
 by the right plot of Figure \ref{Fig2}.

\begin{figure}[h!]
	\centering
	\includegraphics[scale=0.21]{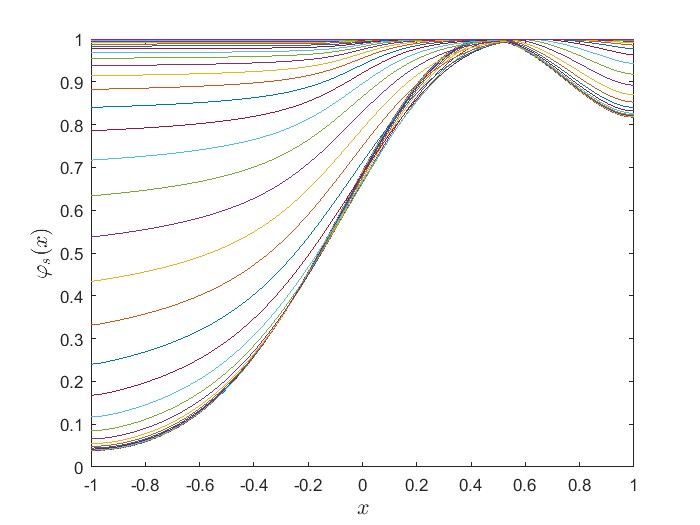}
	\includegraphics[scale=0.21]{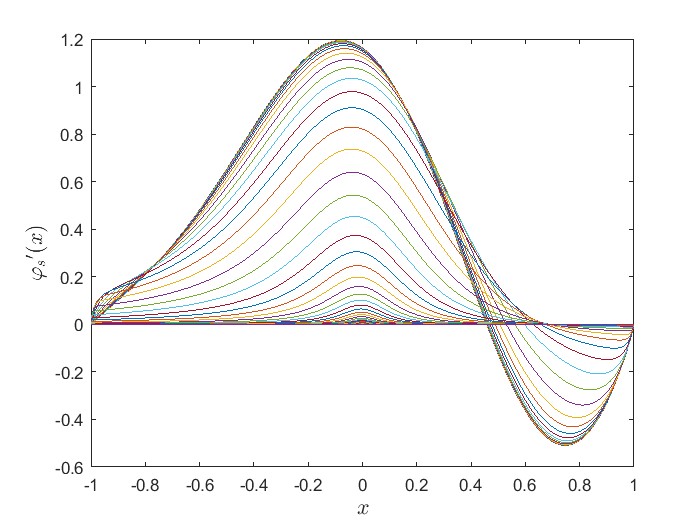}
	\includegraphics[scale=0.21]{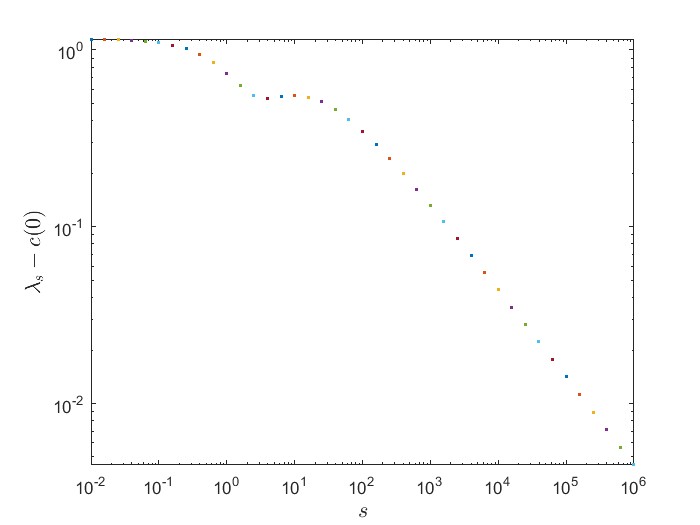}
	\caption{Plots of the normalized eigenfunctions $\v_s$ (left), their derivatives $\v_s'$ (center), and the graph of $\l(s)-c(0)$, with $\l_s=\l(s)$, (right) for a series of values of $s\in [10^{-2},10^6]$, with the  choice \eqref{1.6}.}
	\label{Fig2}
\end{figure}

Figure \ref{Fig3} shows the results of  the  numerical experiments for the choice
\begin{equation}
\label{1.7}
 c(x) = 2 - x,\quad m(x) = 1 - x^8,\quad |x|\leq 1.
\end{equation}

\begin{figure}[h!]
	\centering
	\includegraphics[scale=0.21]{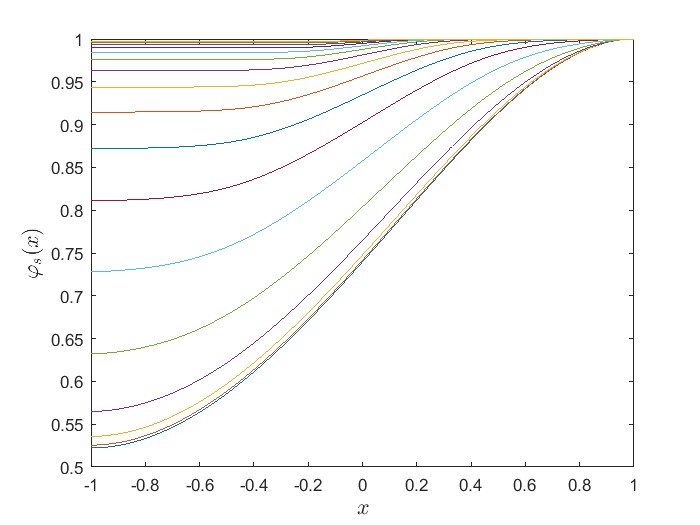}
	\includegraphics[scale=0.21]{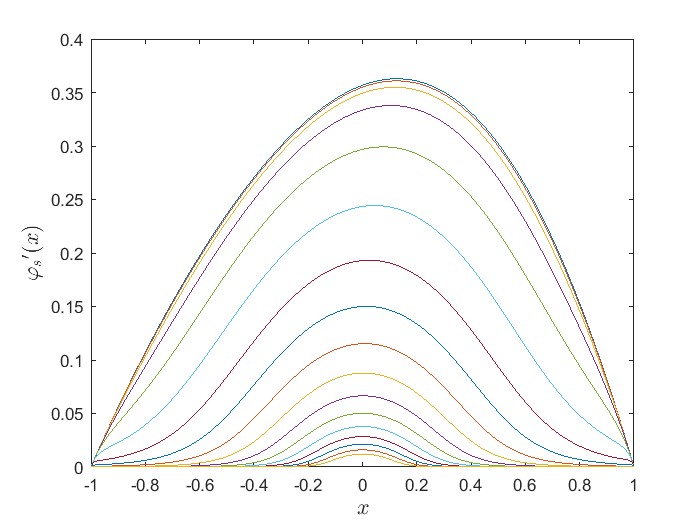}
	\includegraphics[scale=0.21]{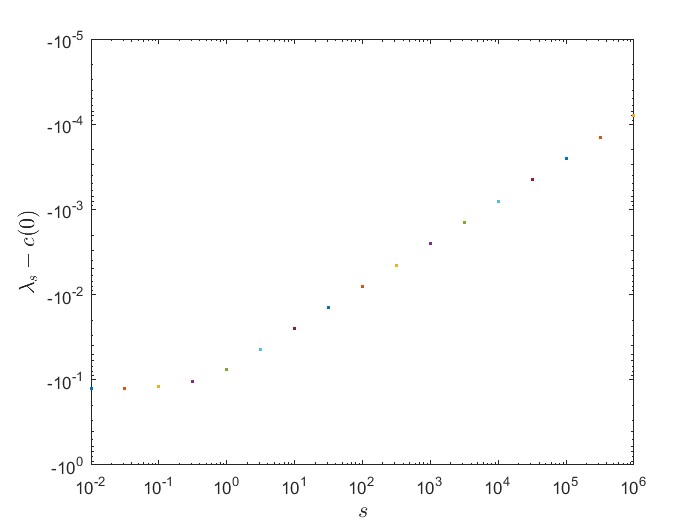}
	\caption{Plots of the normalized eigenfunctions $\v_s$ (left), their derivatives $\v_s'$ (center), and the graph of $\l(s)-c(0)$, with $\l_s=\l(s)$, (right) for a series of values of $s\in [0,10^6]$, with the choice \eqref{1.7}.}
	\label{Fig3}
\end{figure}

\noindent This case is far more singular  than \eqref{1.5} and \eqref{1.6}, as now $n=8$ in \eqref{1.4}.
In this example, $\v_s$ is increasing with respect to $x$ and, as expected,
$$
  \lim_{s\ua \infty}\l(s)=c(0)=2.
$$
However, the convergence is much slower than for the choice \eqref{1.5}, in the sense that $s$
should be much higher than before so that $\l(s)$ can approximate its limiting value, $c(0)=2$.
\par
In order to correctly compare the behavior of $\l(s)$ for different choices of $c(x)$ and $m(x)$, the number of collocation points in the numerical simulations is fixed at $N+1=802$ for all simulations of this paper (see Section 5). From there on, $\l(s)$ is calculated for a range of values of $s$ in which the numerical calculation of $\l(s)$ is stable.  Concretely, $s\in [10^{-2}, 10^4]$ or $s\in [10^{-2}, 10^6]$, as appropriate. Figure \ref{Fig4} shows the results of the numerical computations for the choice
\begin{equation}
\label{1.8}
 c(x) = 40\left(2 -\cos(x -\tfrac{1}{3})\right), \quad m(x) = 1 - x^{16},\qquad |x|\leq 1.
\end{equation}

\begin{figure}[h!]
	\centering
	\includegraphics[scale=0.21]{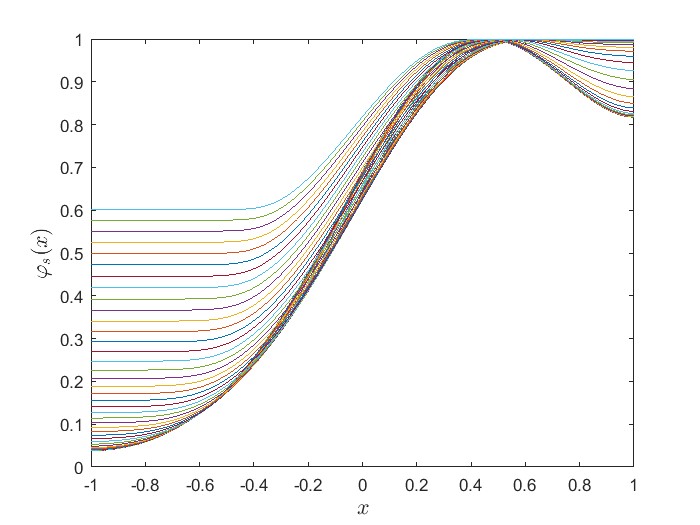}
	\includegraphics[scale=0.21]{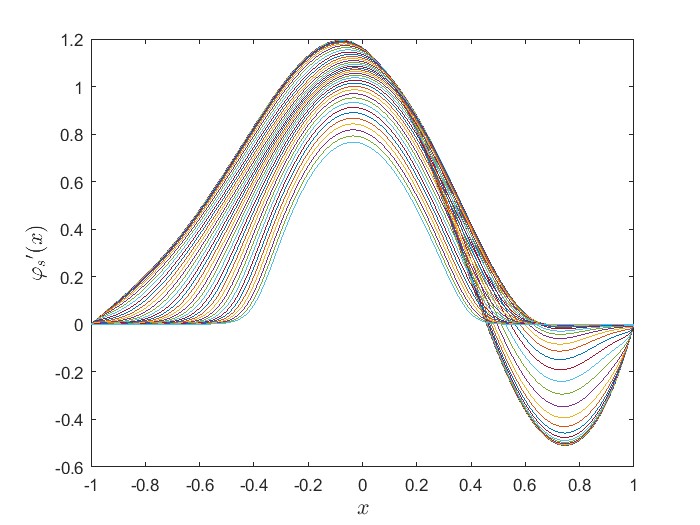}
	\includegraphics[scale=0.21]{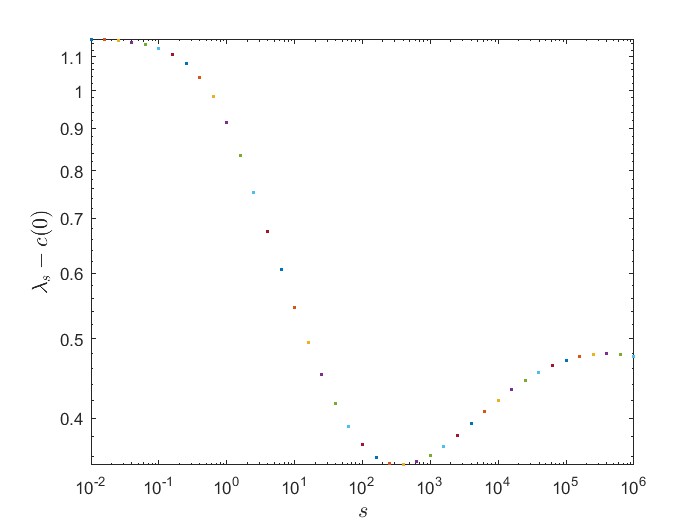}
	\caption{Plots of the normalized eigenfunctions $\v_s$ (left), their derivatives $\v_s'$ (center), and the graph of $\l(s)-c(0)$, with $\l_s=\l(s)$, (right) for a series of values of $s\in [10^{-2},10^6]$, with the choice \eqref{1.8}.}
	\label{Fig4}
\end{figure}
For the choice \eqref{1.8}, the convergence of $\l(s)$ to $c(0)$ slows down because  the value of $s$ must be incremented to catch the desired slope of $-2sm'(x)$ at 0. As the numerical computations begin to be unstable for the agreed value of $N+1=802,$ and $s=10^6$, we stopped the computations before $\v_s$ could approximate $1$ everywhere and the tangent of $\l(s)$ coincide with $c(0)= 42.2017$. Nevertheless, the reader should be aware that
$$
(0.5)^{16}\sim 0.00001525878\sim 10^{-5}.
$$
Thus,
$$
\max_{|x|\leq 0.5} |1-m(x)| \sim 0.00001525878.
$$
This quantifies very well the high degree of degeneration of $1-x^{16}$ at $x_0=0$, and illustrates the high accuracy of the numerical method used to get the plots of Figures \ref{Fig1}--\ref{Fig4}.
\par
From the numerical experiments of this paper, one can naturally infer, at least, for the choice \eqref{1.4}, the following general properties for $\v_s$, $\v_s'$ and $\l(s)$ as $s\ua \infty$:
\begin{enumerate}
\item[(P1)] $\lim_{s\ua \infty}\v_s=1$ uniformly in $[-1,1]$,
\item[(P2)] $\lim_{s\ua \infty}\v'_s=0$ uniformly in $[-1,1]$,
\item[(P3)] $\lim_{s\ua \infty}\l(s)=c(0)$.
\end{enumerate}
And this, regardless the value of the even integer $n\geq 2$. Indeed,  the next result holds (see Theorem \ref{th3.1}). It is the main theorem of this paper.

\begin{theorem}
\label{th1.1}
Suppose $c\in \mc{C}([-1,1];\R)$ and $m$ satisfies {\rm (Hm)}. Then,
\begin{equation}
\lim_{s \rightarrow \infty}\|\v_s'\|_\infty =0\quad \hbox{and}\quad \lim_{s \rightarrow \infty}
\|\v_s-1\|_\infty =0.
\end{equation}
\end{theorem}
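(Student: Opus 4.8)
The plan is to recast \eqref{1.1} in self-adjoint (divergence) form by means of the exponential weight $w_s(x):=e^{2sm(x)}$, which symmetrizes the advection term. Dividing the differential equation by $w_s$ shows that \eqref{1.1} is equivalent to
\begin{equation*}
-\left(w_s\v_s'\right)'+c\,w_s\,\v_s=\l(s)\,w_s\,\v_s\quad\text{in }[-1,1],\qquad \v_s'(-1)=\v_s'(1)=0.
\end{equation*}
Integrating this identity over $[-1,1]$ and using the Neumann conditions to kill the boundary term coming from $(w_s\v_s')'$, one gets $\int_{-1}^1 (c-\l(s))\,w_s\,\v_s\,dx=0$, i.e.
\begin{equation*}
\l(s)=\frac{\int_{-1}^1 c\,w_s\,\v_s\,dx}{\int_{-1}^1 w_s\,\v_s\,dx}.
\end{equation*}
Since $w_s>0$ and the principal eigenfunction satisfies $0<\v_s\le 1$ by \eqref{1.2}, the right-hand side is a weighted average of $c$, which yields at once the uniform \emph{a priori} bound $\min_{|x|\le1}c\le\l(s)\le\max_{|x|\le1}c$. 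In particular $L:=\sup_{s}\|c-\l(s)\|_\infty\le 2\|c\|_\infty<\infty$.

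Next I would derive a pointwise representation of $\v_s'$. Integrating the divergence form from $-1$ to $x$, and alternatively from $x$ to $1$, and invoking $\v_s'(\pm1)=0$, gives
\begin{equation*}
w_s(x)\,\v_s'(x)=\int_{-1}^{x} (c-\l(s))\,w_s\,\v_s\,dt=-\int_{x}^{1}(c-\l(s))\,w_s\,\v_s\,dt.
\end{equation*}
Using $0<\v_s\le1$ and $|c-\l(s)|\le L$, together with $w_s(t)/w_s(x)=e^{-2s(m(x)-m(t))}$, the first representation controls $\v_s'$ on $[-1,x_0]$ and the second on $[x_0,1]$:
\begin{equation*}
|\v_s'(x)|\le L\int_{-1}^{x}e^{-2s(m(x)-m(t))}\,dt\ \ (x\le x_0),\qquad |\v_s'(x)|\le L\int_{x}^{1}e^{-2s(m(x)-m(t))}\,dt\ \ (x\ge x_0).
\end{equation*}
By (Hm) the monotonicity of $m$ makes $m(x)-m(t)\ge0$ in both integrals, so each integrand lies in $(0,1]$.

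The crux, and the step I expect to be the main obstacle, is the uniform Laplace-type decay
\begin{equation*}
\lim_{s\to\infty}\ \sup_{x\in[-1,x_0]}\int_{-1}^{x}e^{-2s(m(x)-m(t))}\,dt=0,
\end{equation*}
together with its mirror image on $[x_0,1]$. Here is where (Hm) is used decisively and where the order of the zero of $m(x)-m(x_0)$ is irrelevant: the conditions $m'\ge0$ on $[-1,x_0)$ and \emph{finitely many} critical points force $m$ to be \emph{strictly} increasing on $[-1,x_0]$, with no flat pieces, so $m$ is a homeomorphism onto its range and its inverse is uniformly continuous with some modulus $\o$. For fixed $\d>0$ I would split the integral at the superlevel set $\{t\le x:m(x)-m(t)<\d\}$, which is an interval of length at most $\o(\d)$ \emph{independently of $x$}, and bound
\begin{equation*}
\int_{-1}^{x}e^{-2s(m(x)-m(t))}\,dt\le \o(\d)+2\,e^{-2s\d}.
\end{equation*}
Letting $s\to\infty$ and then $\d\da0$ gives the claim. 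Notice that only strict monotonicity, and not the nondegeneracy $m''(x_0)<0$ of \cite{ChLo}, enters this estimate, which is exactly the point of the present improvement.

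Finally I would assemble the pieces. The two pointwise bounds and the uniform decay give $\|\v_s'\|_\infty\le L(\o(\d)+2e^{-2s\d})$, hence $\lim_{s\to\infty}\|\v_s'\|_\infty=0$, the first assertion. For the second, pick $x_s$ with $\v_s(x_s)=\|\v_s\|_\infty=1$; then for every $x\in[-1,1]$,
\begin{equation*}
|\v_s(x)-1|=\left|\int_{x_s}^{x}\v_s'(t)\,dt\right|\le 2\,\|\v_s'\|_\infty\longrightarrow 0,
\end{equation*}
so $\|\v_s-1\|_\infty\to0$, completing the proof.
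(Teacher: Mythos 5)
Your proof is correct, and its key step is genuinely different from the paper's. The skeleton is shared: both arguments rest on the same integrating-factor representation $e^{2sm(x)}\v_s'(x)=\int_{-1}^{x}(c-\l(s))e^{2sm}\v_s\,dt$ (the paper's Lemma \ref{le2.2} and \eqref{ii.4}), an a priori bound $\l(s)\in[c_L,c_M]$, and the same endgame of integrating $\v_s'$ from a point where $\v_s=1$. You obtain the eigenvalue bound from the weighted-average identity in divergence form (note: one \emph{multiplies} \eqref{1.1} by $w_s$, not divides, to reach that form, and your inequalities are non-strict rather than the strict ones of Lemma \ref{le2.1} — both harmless), whereas the paper invokes monotonicity of the principal eigenvalue with respect to the potential. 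The real divergence is in the Laplace-type decay: the paper's Theorem \ref{th3.1} splits $[-1,1]$ at every critical point of $m$ and runs a four-case analysis (Cases 1--4, each with several steps), using the mean value theorem and $\min|m'|>0$ to get $O(1/s)$ decay away from critical points, and interval-smallness ($\d<\e/4\|c\|_\infty$, etc.) near them. You instead observe that (Hm) plus ``finitely many critical points'' rules out flat pieces, so $m$ is strictly monotone on $[-1,x_0]$ and on $[x_0,1]$, its inverse is uniformly continuous with modulus $\o$, and a single level-set splitting gives the uniform bound $\o(\d)+2e^{-2s\d}$ for all $x$ simultaneously — no case analysis at all. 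Your route is shorter and more unified, and it isolates exactly the structural fact (strict monotonicity, irrespective of the order of degeneracy at $x_0$ or of interior critical points) that makes the theorem true; the paper's route is longer but yields more quantitative information, namely explicit $O(1/s)$ convergence rates on compact sets avoiding the critical points of $m$.
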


Based on this result, the next theorem can be established.

\begin{theorem}
\label{th1.2}
Suppose $m(x)$ satisfies {\rm (Hm)}, and
\begin{enumerate}
\item[{\rm (Hc)}] the function $c(x)$ satisfies some of the following conditions:
\begin{enumerate}
\item[{\rm (a)}] there exists $y_0\in (-1,1)$ such that $c$ is increasing in $[-1,y_0)$ and decreasing in $(y_0,1]$,
\item[{\rm (b)}] there exists $y_0\in (-1,1)$ such that $c$ is decreasing in $[-1,y_0)$ and increasing in $(y_0,1]$,
\item[{\rm (c)}] $c$ is increasing in $[-1,1]$,
\item[{\rm (d)}] $c$ is decreasing in $[-1,1]$.
\end{enumerate}
\end{enumerate}
Then,
\begin{equation}
\label{1.9}
    \lim_{s \uparrow \infty}\l(s)=c(x_0).
\end{equation}
\end{theorem}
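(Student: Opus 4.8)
The plan is to reduce the computation of $\l(s)$ to a single integral identity and then run a Laplace-type concentration argument. First I would put the operator in \eqref{1.1} into divergence (self-adjoint) form by means of the integrating factor $\rho_s(x):=e^{2sm(x)}$. Since $(\rho_s\v_s')'=\rho_s\v_s''+2sm'\rho_s\v_s'$, multiplying the differential equation by $\rho_s$ transforms \eqref{1.1} into
\begin{equation*}
  -(\rho_s\v_s')'+c(x)\rho_s\v_s=\l(s)\rho_s\v_s\quad\hbox{in}\;[-1,1],\qquad \v_s'(-1)=\v_s'(1)=0.
\end{equation*}
Integrating this over $[-1,1]$ and using $\v_s'(\pm1)=0$, the boundary term $[\rho_s\v_s']_{-1}^{1}$ drops out, leaving the exact representation
\begin{equation}\label{plan:rep}
  \l(s)=\frac{\int_{-1}^{1}c(x)\,e^{2sm(x)}\v_s(x)\,dx}{\int_{-1}^{1}e^{2sm(x)}\v_s(x)\,dx}.
\end{equation}
Thus $\l(s)$ is exactly the average of $c$ against the probability measure $d\mu_s:=e^{2sm}\v_s\,dx\big/\int_{-1}^{1}e^{2sm}\v_s\,dx$, and the whole statement \eqref{1.9} is equivalent to showing that $\mu_s$ concentrates at $x_0$.

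Next I would prove this concentration. Fix $\eta>0$. Since, by (Hm), $x_0$ is the \emph{unique} maximizer of $m$, the number $\g:=m(x_0)-\max_{|x-x_0|\ge\eta}m(x)$ is strictly positive, while by continuity of $m$ at $x_0$ there is $\eta'\in(0,\eta]$ with $m(x)>m(x_0)-\tfrac{\g}{2}$ on $(x_0-\eta',x_0+\eta')$. The normalization \eqref{1.2} gives $0<\v_s\le1$, so the numerator of $\mu_s(\{|x-x_0|\ge\eta\})$ is at most $2\,e^{2s(m(x_0)-\g)}$; for the denominator, Theorem \ref{th1.1} provides $s_0$ with $\v_s\ge\tfrac12$ on $[-1,1]$ for $s\ge s_0$, whence $\int_{-1}^{1}e^{2sm}\v_s\,dx\ge \eta'\,e^{2s(m(x_0)-\g/2)}$. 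Combining the two bounds,
\begin{equation*}
  \mu_s\big(\{|x-x_0|\ge\eta\}\big)\le \frac{2}{\eta'}\,e^{-s\g}\xrightarrow[s\to\infty]{}0 .
\end{equation*}

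Finally I would conclude by the standard splitting. Given $\d>0$, continuity of $c$ furnishes $\eta>0$ with $|c(x)-c(x_0)|<\d$ for $|x-x_0|<\eta$. Writing, from \eqref{plan:rep},
\begin{equation*}
  \l(s)-c(x_0)=\int_{-1}^{1}\big(c(x)-c(x_0)\big)\,d\mu_s(x),
\end{equation*}
and splitting over $\{|x-x_0|<\eta\}$ and its complement, the first piece is bounded by $\d$ (as $\mu_s$ is a probability measure) and the second by $2\|c\|_\infty\,\mu_s(\{|x-x_0|\ge\eta\})$, which tends to $0$ by the previous step. Hence $\limsup_{s\to\infty}|\l(s)-c(x_0)|\le\d$ for every $\d>0$, i.e. \eqref{1.9}.

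The main obstacle is the lower bound on the denominator of \eqref{plan:rep} when $m-m(x_0)$ vanishes to arbitrarily high order at $x_0$: the degeneracy makes the total mass $\int e^{2sm}$ grow only slowly, so one must ensure the factor $\v_s$ cannot drain mass away from $x_0$ and destroy the concentration. This is precisely where Theorem \ref{th1.1} is indispensable, since it keeps $\v_s$ uniformly bounded below by a positive constant; with that in hand, concentration at the strict maximum $x_0$ holds irrespective of the order of degeneracy, which is exactly the point of the paper. I would note in passing that this route uses only (Hm), the normalization \eqref{1.2}, and Theorem \ref{th1.1}; the monotonicity hypotheses (Hc) are not needed for \eqref{1.9} along this argument, so they presumably serve a more delicate comparison-based proof or finer monotonicity statements for $\v_s$ and $\l(s)$.
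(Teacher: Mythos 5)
Your proposal is correct, and it takes a genuinely different route from the paper. The paper proves this statement (as Theorem \ref{th4.1}) by contradiction: it extracts $s_n\ua\infty$ with $\l(s_n)\to\l_\infty=c(x_\infty)\neq c(x_0)$, then runs a case-by-case analysis of the root structure of $c(x)=\l_\infty$ (one transversal root, two transversal roots, a non-transversal root, no roots). This is exactly where (Hc) enters: it restricts the possible nodal configurations of $c-\l_n$ and, via the monotonicity theory of Section 2 (Lemma \ref{le2.3}, Theorem \ref{th2.1}), fixes the sign of $\v_{s_n}'$ on the interval joining the roots $x_n$ to $x_0$; integrating the equation of \eqref{1.1} over that interval and invoking Theorem \ref{th3.1} then forces the advection term $-2s_n\int m'\v_{s_n}'$ to carry a sign incompatible with that monotonicity. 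Your argument replaces all of this by the exact identity obtained from testing \eqref{1.1} against the adjoint principal eigenfunction $e^{2sm}$ (equivalently, integrating the divergence form under the Neumann conditions), which exhibits $\l(s)$ as the average of $c$ against the probability density proportional to $e^{2sm}\v_s$, followed by a Laplace-type concentration estimate whose only inputs are the uniqueness of the maximizer $x_0$ in (Hm) (giving $\g>0$) and the uniform lower bound $\v_s\geq \tfrac12$ for large $s$ from Theorem \ref{th1.1}; the ratio bound $\tfrac{2}{\eta'}e^{-s\g}$ is insensitive to the order of degeneracy of $m$ at $x_0$, which is the whole point. Each step checks out (the only cosmetic fix is to shrink $\eta'$ so that $(x_0-\eta',x_0+\eta')\subset[-1,1]$, harmless since $x_0\in(-1,1)$), and the logical order is sound because Theorem \ref{th1.1} is proved independently in Section 3. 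What is notable is what you observe at the end: your proof never uses (Hc), so it establishes \eqref{1.9}, and hence \eqref{1.10}, for \emph{every} $c\in\mc{C}([-1,1];\R)$, thereby answering affirmatively the question the authors explicitly leave open in the introduction, where they state they could not prove the limit beyond (Hc). What the paper's heavier route buys instead is the detailed qualitative picture of $\v_s$ (uniqueness and type of its critical points between transversal roots of $c-\l(s)$), which has independent interest but is not needed for the limit itself.
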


According to Theorem \ref{1.2}, it is apparent that
the condition that all critical points of $m(x)$ are non-degenerate in Theorem 1.2 of Chen and Lou \cite{ChLo} is  superfluous for the validity of \eqref{1.9}, at least, in the simplest one-dimensional examples,
as already shown by Peng and Zhou \cite{PeZh} under the
stronger assumption that $m\in\mc{C}^2([-1,1];\R)$.
\par
Going back to the differential equation of \eqref{1.1}, since $m'(x_0)=0$, we find that
$$
   -\v_s''(x_0) = -\v_s''(x_0)-2sm'(x_0)\v_s'(x_0)= (\l(s)-c(x_0))\v_s(x_0).
$$
Thus, by Theorem \ref{th1.1},  \eqref{1.9} holds true if, and only if,
\begin{equation}
\label{1.10}
  \lim_{s\ua \infty}\v_s''(x_0)=0.
\end{equation}
Although, based on Theorem \ref{th1.1}, \eqref{1.10} should be true under assumption
(Hm) for all $c\in \mc{C}([-1,1];\R)$, we were not able to get a rigorous proof of this fact, except when $c(x)$ satisfies (Hc) in Theorem \ref{th1.2}.
\par
Suppose that $m(x)$ has some critical point $x_c\in (-1,1)\setminus\{x_0\}$ and $c(x)$ satisfies (Hc). Then,  by Theorem \ref{th1.2},
$$
  \lim_{s\ua \infty}\l(s)= c(x_0).
$$
Suppose, in addition, that $c(x)$ is chosen so that $c(x_c)\neq c(x_0)$. Then, since $m'(x_c)=0$,
$$
  -\v_s''(x_c)=(\l(s)-c(x_c))\v_s(x_c) \quad \hbox{for all}\;\; s\in\R.
$$
Therefore, according to Theorems \ref{th1.1} and \ref{th1.2},
$$
   \lim_{s\to \infty}\v_s''(x_c)=c(x_c)-c(x_0)\neq 0.
$$
Therefore, in the general setting of this paper,
$$
  \lim_{s\ua \infty}\v_s''\neq 0,
$$
though for the special choices \eqref{1.4}, where $x_0=0$ is the unique critical point of $m(x)$,
our numerical simulations suggest that $\v_s''$ should approximate $0$ as $s\ua \infty$.
The analysis of the asymptotic behavior of $\v_s''$ remains as an open problem in this paper.
Figure  \ref{Fig5} collects the plots of the functions $\v_s''$ computed for the choices
\eqref{1.5}, \eqref{1.6} and \eqref{1.7}, respectively.

\begin{figure}[h!]
	\centering
	\includegraphics[scale=0.21]{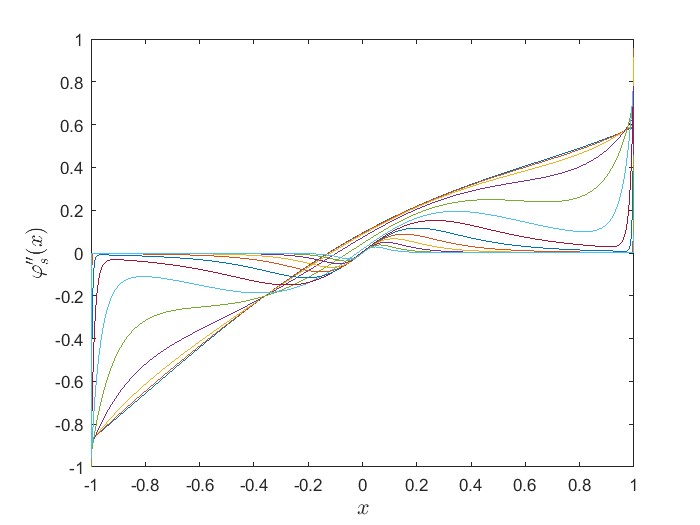}
	\includegraphics[scale=0.21]{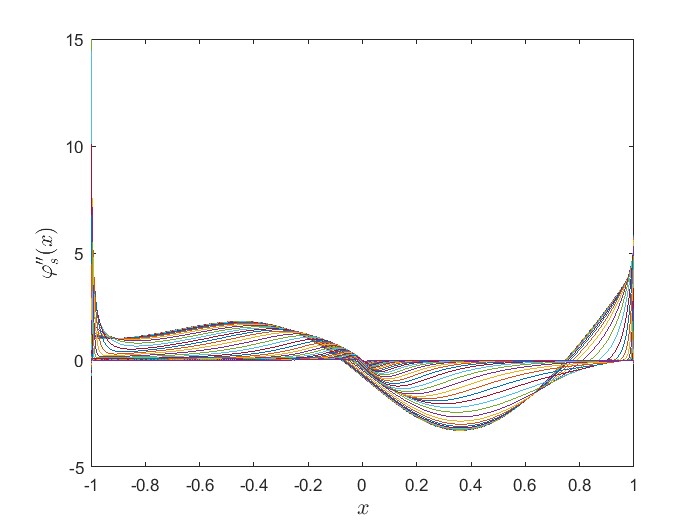}
	\includegraphics[scale=0.21]{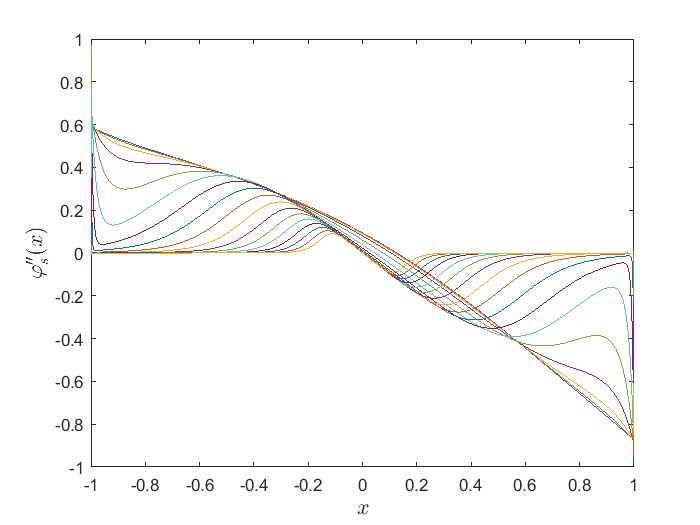}
	\caption{Plots of the computed functions $\v_s''$ for each of the choices
\eqref{1.5} (left), \eqref{1.6} (center) and \eqref{1.7} (right). }
	\label{Fig5}
\end{figure}

The distribution of this paper is the following. Section 2 collects a number of oscillatory properties of
the principal eigenfunctions $\v_s$ and their associated derivatives, $\v_s'$, that are going to be used
in the proofs of Theorems \ref{th1.1} and \ref{th1.2}. Some of them have a great interest on their own. Sections 3 and 4 deliver the proofs of Theorems \ref{th1.1} and \ref{th1.2}, respectively. Finally,
in Section 5 eight eigenvalue problems satisfying the hypotheses of this paper are numerically simulated and analyzed. These numerical examples reinforce and illustrate the analytical results.

\section{General properties of $(\l(s),\v_s)$}

\noindent Throughout this paper, we set
$$
   \ms{L}_s:=-\frac{d^2}{dx^2}-2sm'(x)\frac{d}{dx}
$$
for all $s\in\R$, and denote by $\ms{N}$ the Neumann operator on the ends of $[-1,1]$, i.e.,
$$
\ms{N}(\v):=\left(-\v'(-1),\v'(1)\right).
$$
Moreover, for every continuous functions $h\in\mc{C}([-1,1];\R)$, we denote
$$
  h_L:=\min_{|x|\leq 1}h(x), \qquad h_M:=\max_{|x|\leq 1}h(x).
$$
Using these notations, the problem \eqref{1.1} can be equivalently expressed as
\begin{equation}
\label{2.1}
\left\{
\begin{array}{l}
\ms{L}_s\v_s+c(x)\v_s=\lambda(s) \v_s  \quad \hbox{in}\;\; [-1,1], \\[1ex]
\ms{N}(\v_s) =0. \end{array} \right.
\end{equation}
The existence of the principal eigenvalue,
\begin{equation}
\label{2.2}
\lambda(s)=\sigma[\ms{L}_s+ c(x),\ms{N}],
\end{equation}
and the principal eigenfunction, $\v_s$, as well as their main properties,  can be found in
\cite[Ch. 7]{LG13}. The main aim of this section is analyzing the shape of $\v_s$, normalized so that
\eqref{1.2} holds, i.e.
$$
  (\v_s)_M=\|\v_s\|_{\infty}=1.
$$
Since $\varphi=1>0$ is a positive solution of
\begin{equation*}
\left\{
\begin{array}{l}
\ms{L}_s \varphi=0 \;\;  \hbox{in}\;\;  [-1,1], \\
\varphi'(-1)=\varphi'(1)=0, \end{array} \right.
\end{equation*}
by the uniqueness of the principal eigenvalue, it becomes apparent that
\begin{equation}
\label{2.3}
  \sigma[\ms{L}_s,\ms{N}]=0.
\end{equation}
Thus, in the special case when $c(x)$ is a constant, $c$, we have that
$$
   \lambda(s)=\sigma[\ms{L}_s+ c,\ms{N}]=c\quad \hbox{for all}\;\; s\in\R,
$$
regardless the nature of $m(x)$.
\par
The next result estimates the value of $\l(s)$ in the more general case when
$c(x)$ is non-constant. It has an obvious multidimensional counterpart, with exactly the same proof.

\begin{lemma}
\label{le2.1}
Suppose that $c\in\mc{C}[-1,1]$ is non-constant. Then, for every $s \in \R$, $\lambda (s) \in (c_L,c_M)$. Thus, there exists $x_s\in [-1,1]$ such that $\lambda(s)=c(x_s)$.
\end{lemma}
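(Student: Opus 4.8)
The plan is to exploit the one-dimensional structure of $\ms{L}_s$ in order to rewrite the eigenvalue problem in self-adjoint (Sturm--Liouville) form, and thereby to express $\l(s)$ as a weighted average of $c(x)$. First I would introduce the integrating factor $\mu(x):=e^{2sm(x)}$, which is of class $\mc{C}^1$ and strictly positive on $[-1,1]$, and observe that, since $\mu'(x)=2sm'(x)\mu(x)$,
$$
   \mu(x)\,\ms{L}_s\v = -\frac{d}{dx}\!\left(\mu(x)\,\v'\right)\qquad \hbox{for all } \v\in\mc{C}^2([-1,1]).
$$
Multiplying the differential equation in \eqref{2.1} by $\mu$ then recasts it as
$$
   -\frac{d}{dx}\!\left(\mu\,\v_s'\right)+\mu\,c\,\v_s=\l(s)\,\mu\,\v_s\quad \hbox{in } [-1,1],
$$
which is a genuine Sturm--Liouville problem with the strictly positive weight $\mu$.

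Next I would integrate this identity over $[-1,1]$. The divergence term contributes the boundary value $-\left[\mu\,\v_s'\right]_{-1}^{1}$, which vanishes because $\ms{N}(\v_s)=0$ forces $\v_s'(-1)=\v_s'(1)=0$. Hence
$$
   \l(s)=\frac{\int_{-1}^{1} c(x)\,\mu(x)\,\v_s(x)\,dx}{\int_{-1}^{1}\mu(x)\,\v_s(x)\,dx},
$$
which exhibits $\l(s)$ as the average of $c$ against the weight $w:=\mu\,\v_s$; here I use that the principal eigenfunction is strictly positive, $\v_s>0$ on $[-1,1]$, as guaranteed by \cite[Ch. 7]{LG13}, so that $w>0$ throughout $[-1,1]$.

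To conclude, I would argue that a weighted average of a non-constant continuous function lies strictly inside the range of that function. Since $c_L\le c\le c_M$ and $w>0$, we have $c_L\int_{-1}^{1} w\le \int_{-1}^{1} c\,w\le c_M\int_{-1}^{1} w$; and because $c$ is continuous and non-constant, neither $c\equiv c_L$ nor $c\equiv c_M$, so both inequalities are strict. Dividing by $\int_{-1}^{1} w\,dx>0$ yields $c_L<\l(s)<c_M$, that is, $\l(s)\in(c_L,c_M)$. Finally, since $c$ is continuous on the compact connected interval $[-1,1]$ and attains both $c_L$ and $c_M$, the intermediate value theorem provides $x_s\in[-1,1]$ with $c(x_s)=\l(s)$.

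The one point requiring genuine care is the strictness of the two inequalities, for which the continuity of $c$ together with the positivity of the weight $w$ is essential: it is exactly here that the hypothesis that $c$ is non-constant is used, ruling out the limiting cases $\l(s)=c_L$ and $\l(s)=c_M$. I would also note that the argument transfers verbatim to the multidimensional setting alluded to just after the statement, because there the advection is again a gradient field, so $\mu=e^{2sm}$ remains an integrating factor, the term $-\div(\mu\nabla\v_s)$ replaces the one-dimensional divergence term, and its integral still vanishes under the Neumann boundary condition; the weighted-average representation, and hence the conclusion, are unchanged.
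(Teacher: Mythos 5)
Your proof is correct, but it follows a genuinely different route from the paper's. The paper's argument is abstract and very short: since $\varphi\equiv 1$ solves the problem with $c\equiv 0$, one has $\sigma[\ms{L}_s,\ms{N}]=0$ as in \eqref{2.3}, and then the strict monotonicity of the principal eigenvalue with respect to the potential (quoted from \cite{CCLG}) sandwiches $\l(s)$ strictly between $\sigma[\ms{L}_s,\ms{N}]+c_L=c_L$ and $\sigma[\ms{L}_s,\ms{N}]+c_M=c_M$. You instead exploit the one-dimensional gradient structure of the drift: the integrating factor $\mu=e^{2sm}$ puts $\ms{L}_s$ in self-adjoint Sturm--Liouville form, integration over $[-1,1]$ kills the boundary term thanks to the Neumann conditions, and the positivity of the principal eigenfunction (from \cite[Ch. 7]{LG13}) yields the representation
\begin{equation}
\l(s)=\frac{\int_{-1}^{1} c\,e^{2sm}\,\v_s\,dx}{\int_{-1}^{1} e^{2sm}\,\v_s\,dx},
\end{equation}
from which the strict inequalities follow since $c$ is continuous and non-constant. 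Your strictness argument is sound: continuity gives an interval where $c-c_L$ is bounded below by a positive constant and the weight is positive there, so $\int (c-c_L)\,e^{2sm}\v_s\,dx>0$, and symmetrically for $c_M$. What each approach buys: yours is self-contained (no appeal to abstract monotonicity theory) and actually proves more, namely an explicit weighted-average formula for $\l(s)$ that could be reused quantitatively; the paper's is shorter and, more importantly, indifferent to the structure of the advection, so its ``obvious multidimensional counterpart with exactly the same proof'' holds for arbitrary drift terms, whereas your integrating-factor device requires the advection to be a gradient field --- which, as you correctly note, is the case in the Chen--Lou setting, but is a genuine restriction in general.
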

\begin{proof}
Fix $s\in\R$. By the monotonicity of the principal eigenvalue with respect to the potencial (see \cite[Sect. 3]{CCLG}), it follows from \eqref{2.2} and \eqref{2.3} that
\begin{equation*}
c_L=\sigma_1[\ms{L}_s,\ms{N}]+c_L < \lambda(s) < \sigma_1[\ms{L}_s,\ms{N}]+c_M =c_M.
\end{equation*}
By continuity of $c(x)$, the existence of $x_s$ holds. This ends the proof.
\end{proof}

The next result provides us with the values of the derivative, $\v_s'(x)$, from any critical point, $y_0\in[-1,1]$, of the eigenfunction $\v_s(x)$.

\begin{lemma}
\label{le2.2}
Suppose $y_0\in [-1,1]$ is a critical point of $\v_s$, i.e. $\v_s'(y_0)=0$. Then,
for every $x \in [-1,1]$,
\begin{equation}
\label{ii.3}
\v_s'(x)=\int_{y_0}^x(c(t)-\l(s))\v_s(t)e^{2s(m(t)-m(x))}\,dt.
\end{equation}
In particular, since $\v_s'(-1)=\v_s'(1)=0$,
\begin{equation*}
\int_{-1}^{y_0} (c(t)-\l(s))\v_s(t) e^{2sm(t)}\,dt=0=\int_{y_0}^1 (c(t)-\l(s))\v_s(t) e^{2sm(t)}\,dt.
\end{equation*}
\end{lemma}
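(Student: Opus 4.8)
The plan is to recognize that the operator $\ms{L}_s$ carries a natural integrating factor which converts the differential equation of \eqref{2.1} into divergence form, after which a single integration from the critical point $y_0$ delivers \eqref{ii.3} at once.

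First I would rewrite the eigenvalue equation $\ms{L}_s\v_s + c(x)\v_s=\l(s)\v_s$ in the form
$$\v_s''(x) + 2sm'(x)\v_s'(x)=(c(x)-\l(s))\v_s(x),\qquad x\in[-1,1].$$
Multiplying by the integrating factor $e^{2sm(x)}$ and observing the elementary identity
$$\frac{d}{dx}\left(e^{2sm(x)}\v_s'(x)\right)=e^{2sm(x)}\left(\v_s''(x)+2sm'(x)\v_s'(x)\right),$$
the equation becomes the exact relation
$$\frac{d}{dx}\left(e^{2sm(x)}\v_s'(x)\right)=e^{2sm(x)}\,(c(x)-\l(s))\,\v_s(x).$$

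Next I would integrate this identity over the interval with endpoints $y_0$ and $x$. Since $\v_s'(y_0)=0$ by hypothesis, the boundary contribution at $y_0$ vanishes, so that
$$e^{2sm(x)}\v_s'(x)=\int_{y_0}^{x} e^{2sm(t)}\,(c(t)-\l(s))\,\v_s(t)\,dt.$$
Dividing by $e^{2sm(x)}>0$ and absorbing this factor inside the integral as $e^{2s(m(t)-m(x))}$ produces exactly \eqref{ii.3}. For the final assertion I would exploit that $-1$ and $1$ are themselves critical points of $\v_s$, since the Neumann conditions $\ms{N}(\v_s)=0$ give $\v_s'(-1)=\v_s'(1)=0$. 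Evaluating the integrated identity at $x=1$ forces $\int_{y_0}^1 e^{2sm(t)}(c(t)-\l(s))\v_s(t)\,dt=0$, while evaluating it at $x=-1$ and reversing the orientation of the resulting integral gives $\int_{-1}^{y_0} e^{2sm(t)}(c(t)-\l(s))\v_s(t)\,dt=0$, which is the stated pair of equalities.

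I do not anticipate a substantive obstacle in this argument, as it rests only on a one-dimensional integration. The single point demanding care is the correct identification of the integrating factor $e^{2sm(x)}$, including the sign and the coefficient $2s$ in the exponent; after that, the only bookkeeping is tracking the orientation of the integral when specializing $x$ to the two endpoints.
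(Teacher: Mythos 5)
Your proof is correct and is essentially the paper's argument: the paper performs the change of variables $u_s=\v_s'$ followed by $u_s=e^{-2sm(x)}v_s$, which is precisely your integrating-factor computation written as a substitution, and both arguments then integrate the resulting exact relation from $y_0$ and divide by $e^{2sm(x)}$. Your handling of the endpoint identities via $x=\pm 1$ likewise matches the paper's.
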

\vspace{0.2cm}

Note that, making the choices $y_0=-1$ and $y_0=1$ in \eqref{ii.3}, we find that
\begin{equation}
\label{ii.4}
\begin{split}
\v_s'(x) & =\int_{-1}^x(c(t)-\l(s))\v_s(t)e^{2s(m(t)-m(x))}\,dt\\
      & = \int_x^1 (\l(s)-c(t))\v_s(t)e^{2s(m(t)-m(x))}\,dt
\end{split}
\end{equation}
for all $x\in [-1,1]$.

\begin{proof}
Making the change of variable $u_s=\v_s'$ in \eqref{1.1}, $u_s$ satisfies the first order problem
\begin{equation}
\label{ii.5}
\left\{
\begin{array}{ll}
u_s'(x)+2sm'(x)u_s(x)=(c(x)-\lambda(s))\v_s(x),& x \in [-1,1],\\
u_s(y_0)=0,&
\end{array}
\right.
\end{equation}
whose solution can be easily determined by doing the change of variable
\begin{equation}
\label{ii.6}
  \v_s'(x)=u_s(x)= e^{-2sm(x)}v_s(x).
\end{equation}
Indeed, substituting in \eqref{ii.5} shows that
$$
  -2sm'(x) e^{-2sm(x)}v_s(x)+e^{-2sm(x)}v_s'(x)+2sm'(x)e^{-2sm(x)}v_s(x)=
  (c(x)-\lambda(s))\v_s(x).
$$
Thus, simplifying and rearranging terms, we find that
$$
  v_s'(x)= (c(x)-\lambda(s))\v_s(x)e^{2sm(x)} \quad \hbox{for all}\;\; x\in [-1,1].
$$
Therefore, since $v_s(y_0)=0$, it becomes apparent that
$$
   v_s(x)=\int_{y_0}^x (c(t)-\lambda(s))\v_s(t)e^{2sm(t)}\,dt.
$$
The identity \eqref{ii.3} follows readily by substituting this identity in \eqref{ii.6}.
\end{proof}

Now, we will introduce some basic concepts and notational conventions that are going to be used
through the rest of this paper. A function $h\in \mc{C}(J)$ defined on some subinterval $J$ of $[-1,1]$,
is said to satisfy $h>0$, or $h<0$, in $J$, if there exists a negligible subset $Z\subset J$, with Lebesgue measure zero, such that either $h(x)>0$ for all $x\in J\setminus Z$, or $h(x)<0$ for all $x\in J\setminus Z$, respectively. Naturally, for any other $g\in \mc{C}(J)$, it is said that $h>g$ (resp. $h<g$) if
$h-g>0$ (resp. $h-g<0$).

\begin{definition}
\label{de2.1}
For any given  $c\in \mc{C}[-1,1]$, $\l\in\R$ and $x_1\in (-1,1)$ such that $c(x_1)=\l$,
it is said that $\l$ is \emph{transversal value} of $c(x)$ at $x_1$ if $c(x)-\l$ changes sign at $x_1$, in the
sense that there exists $\eta>0$ such that either
\begin{itemize}
\item $c-\l>0$ in $[x_1-\eta,x_1)$ and $c-\l<0$ in $(x_1,x_1+\eta]$, or
\item $c-\l<0$ in $[x_1-\eta,x_1)$ and $c-\l>0$ in $(x_1,x_1+\eta]$.
\end{itemize}
\end{definition}

 Subsequently, we will assume that $c\in \mc{C}[-1,1]$ satisfies the following transversality condition
\begin{enumerate}
\item[(TC)] For every $\l\in [c_L,c_M]$, the set of $x_1\in (-1,1)$ such that
$c(x_1)=\l$ for which $\l$ is a transversal value of $c(x)$ at $x_1$ is finite.
\end{enumerate}
The next result provides us with some important properties of the eigenfunction
$\v_s$ in a certain neighborhood of any critical point, $y_0$,  according to the nodal structure
of $c(x)-\l(s)$. It is a pivotal result to ascertain the precise shape of $\v_s(x)$ in terms of
the nodal properties of  $c(x)-\l(s)$ in $[-1,1]$.

\begin{lemma}
\label{le2.3}
Suppose that $c(x)$ satisfies {\rm (TC)} and let $y_0\in [-1,1]$ be such that $\v_s'(y_0)=0$. Then, the following properties are satisfied:
\begin{enumerate}
\item[(a)] If $y_0<1$ and there exists $x_+ \in (y_0,1)$ such that $c<\l(s)$ in $(y_0,x_+)$
and $\l(s)$ is transversal to $c(x)$ at $x_+$, then
$$
  \v_s'(x) <0 \quad \hbox{for all} \;\; x\in (y_0,x_+].
$$
\item[(b)] If $-1<y_0$ and there exists $x_- \in (-1,y_0)$ such that $c<\l(s)$ in $(x_-,y_0)$
and $\l(s)$ is transversal to $c(x)$ at $x_-$, then
$$
  \v_s'(x) > 0 \quad \hbox{for all} \;\; x\in [x_-,y_0).
$$
\item[(c)]  If $y_0<1$ and there exists $x_+ \in (y_0,1)$ such that $c>\l(s)$ in $(y_0,x_+)$
and $\l(s)$ is transversal to $c(x)$ at $x_+$, then
$$
  \v_s'(x) > 0 \quad \hbox{for all} \;\; x\in (y_0,x_+].
$$
\item[(d)] If $-1<y_0$ and there exists $x_- \in (-1,y_0)$ such that $c>\l(s)$ in $(x_-,y_0)$
and $\l(s)$ is transversal to $c(x)$ at $x_-$, then
$$
  \v_s'(x) < 0 \quad \hbox{for all} \;\; x\in [x_-,y_0).
$$
\end{enumerate}
\end{lemma}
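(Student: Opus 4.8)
The plan is to read off all four sign statements directly from the integral representation of $\v_s'$ furnished by Lemma \ref{le2.2}, using only the strict positivity of the principal eigenfunction $\v_s$ and of the exponential weight. Since $y_0$ is a critical point of $\v_s$, Lemma \ref{le2.2} gives, for every $x\in[-1,1]$,
$$
\v_s'(x)=\int_{y_0}^x(c(t)-\l(s))\v_s(t)e^{2s(m(t)-m(x))}\,dt.
$$
The weight $e^{2s(m(t)-m(x))}$ is everywhere positive, and $\v_s(t)>0$ throughout $[-1,1]$ because $\v_s$ is a principal eigenfunction normalized by \eqref{1.2}; hence the sign of the integrand is governed entirely by the factor $c(t)-\l(s)$, and each conclusion becomes a one-line sign computation.

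First I would treat case (a). For fixed $x\in(y_0,x_+]$ the integration interval $[y_0,x]$ is contained in $[y_0,x_+]$, and by hypothesis $c(t)-\l(s)<0$ on $(y_0,x_+)$ in the a.e.\ sense of the paper's sign convention. Thus the integrand is strictly negative off a negligible set of $(y_0,x)$, the integral is strictly negative, and $\v_s'(x)<0$. Case (c) is identical with the sign of $c-\l(s)$ reversed, yielding a positive integrand and $\v_s'(x)>0$ on $(y_0,x_+]$. For cases (b) and (d), where $x<y_0$, I would rewrite the representation as
$$
\v_s'(x)=-\int_x^{y_0}(c(t)-\l(s))\v_s(t)e^{2s(m(t)-m(x))}\,dt,
$$
and run the same analysis on $[x,y_0]\subseteq[x_-,y_0]$: in (b) the hypothesis $c-\l(s)<0$ makes the inner integral negative, so $\v_s'(x)>0$; in (d) the hypothesis $c-\l(s)>0$ makes it positive, so $\v_s'(x)<0$.

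The only point requiring care, and the one I would state explicitly, is the \emph{strictness} of the inequalities up to and including the crossing endpoints $x_+$ (resp.\ $x_-$). This is where the sign-in-the-interior convention enters: since $c-\l(s)$ is negative (or positive) off a set of Lebesgue measure zero on the full-length interval $(y_0,x_+)$, the relevant part of the integrand contributes on a set of full measure even when $x=x_+$, so the integral is strictly, not merely weakly, signed. I would note that the transversality of $\l(s)$ at $x_+$ (resp.\ $x_-$) is not actually invoked in this sign computation; its role, together with (TC), is to pin $x_+$ down as a genuine, isolated sign change of $c-\l(s)$, which is the configuration in which this lemma will later be chained across the successive nodal subintervals of $c-\l(s)$. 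Consequently there is no real obstacle here: the statement is a direct corollary of Lemma \ref{le2.2} once positivity of $\v_s$ and the measure-zero convention are accounted for.
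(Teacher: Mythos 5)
Your proposal is correct and follows essentially the same route as the paper: both read the four sign statements directly off the integral representation of Lemma \ref{le2.2}, treating cases (a), (c) by the direct formula and cases (b), (d) by flipping the limits of integration, with the sign of the integrand dictated by $c-\l(s)$ since $\v_s$ and the exponential weight are positive. Your additional remarks on strictness at the crossing endpoints and on the fact that transversality plays no role in the sign computation itself are accurate and consistent with the paper's argument.
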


\begin{proof}
By Lemma \ref{le2.2}, we already know that
\begin{equation}
\v_s'(x)=\int_{y_0}^x(c(t)-\l(s))\v_s(t)e^{2s(m(t)-m(x))}\,dt
\end{equation}
for all $x\in [-1,1]$. Thus, Part (a) follows from the fact that $x>y_0$ if $x\in (y_0,x_+)$ and
that $c<\l(s)$ in $(y_0,x_+)$. Similarly, Part (c) follows from the hypothesis that $c>\l(s)$
in $(y_0,x_+)$.
\par
Lastly,  note that in the setting of Parts (b) and (d), since $x<y_0$,
\begin{equation}
\v_s'(x)=-\int_{x}^{y_0}(c(t)-\l(s))\v_s(t)e^{2s(m(t)-m(x))}\,dt.
\end{equation}
Thus, $\v_s'(x)<0$ if $c>\l(s)$ in $(x_-,y_0)$, while $\v_s'(x)>0$ if $c<\l(s)$ in $(x_-,y_0)$. This ends the proof.
\end{proof}

Based on Lemma \ref{le2.3}, the next result holds.

\begin{theorem}
\label{th2.1}
Suppose that $c$ satisfies the transversality condition {\rm (TC)}. Then, the set of values of $x_t \in (-1,1)$ for which $\l(s)$ is a transversal value of $c(x)$ at $x_t$ is finite, say
$$
  \mathcal{T}_s=\{x_{t,j}:\;1\leq j \leq q_s\}
$$
for some integer $q_s\geq 1$. Moreover,
\begin{enumerate}
\item[(a)]  $\v_s'(x)<0$ for all $x\in (-1,1)$ if $q_s=1$ and $c<\l(s)$ in $(-1,x_{t,1})$.
\item[(b)] $\v_s'(x)>0$ for all $x\in (-1,1)$  if $q_s=1$ and $c>\l(s)$ in $(-1,x_{t,1})$.
\item[(c)] Assume that $q_s\geq 2$. Then:
\begin{enumerate}
\item[(i)] $\v_s'(x)<0$ (resp. $\v_s'(x)>0$) for all $x\in (-1,x_{t,1}]$ if $c<\l(s)$ (resp. $c>\l(s)$) in $(-1,x_{t,1})$.
\item[(ii)] $\v_s'(x)<0$ (resp. $\v_s'(x)>0$) for all $x\in [x_{t,q_s},1)$ if $c>\l(s)$ (resp. $c<\l(s)$) in $(x_{t,q_s},1)$.
\item[(iii)] For any given $j\in\{1,...,q_s-1\}$,  should there exists some point $y_{0,j}\in (x_{t,j},x_{t,j+1})$ such that $\v_s'(y_{0,j})=0$, then $y_{0,j}$ is unique, and
\begin{equation}
\label{ii.7}
\left\{\begin{array}{l} \v_s'(x)>0 \;\;\hbox{for all}\;\; x\in[x_{t,j},y_{0,j}),\\
\v_s'(x)<0 \;\;\hbox{for all}\;\; x\in(y_{0,j},x_{t,j+1}],\end{array}\right.
\end{equation}
if $c<\l(s)$ in $(x_{t,j},x_{t,j+1})$, whereas
\begin{equation}
\label{ii.8}
\left\{\begin{array}{l}
\v_s'(x)<0 \;\;\hbox{for all}\; \; x\in[x_{t,j},y_{0,j}),\\
\v_s'(x)>0 \;\;\hbox{for all}\; \; x\in(y_{0,j},x_{t,j+1}],\end{array}\right.
\end{equation}
if $c>\l(s)$ in $(x_{t,j},x_{t,j+1})$.  Therefore, $y_{0,j}$ is the unique critical point of the eigenfunction $\v_s$ in $[x_{t,j},x_{t,j+1}]$. Furthermore, $y_{0,j}$ is a global maximum if $c<\l(s)$ in $(x_{t,j},x_{t,j+1})$, and a global minimum if $c>\l(s)$ in $(x_{t,j},x_{t,j+1})$.
\end{enumerate}
\end{enumerate}
\end{theorem}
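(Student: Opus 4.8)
The plan is to reduce the whole statement to the two sign formulae for $\v_s'$ in Lemma \ref{le2.2} and to the local sign analysis of Lemma \ref{le2.3}, once the nodal skeleton of $c-\l(s)$ has been pinned down. The finiteness of $\mathcal{T}_s$ is immediate: by Lemma \ref{le2.1} we have $\l(s)\in(c_L,c_M)\subset[c_L,c_M]$, so (TC) applied with $\l=\l(s)$ gives that the set of transversal values of $\l(s)$ is finite; I relabel them as $x_{t,1}<\cdots<x_{t,q_s}$. To see $q_s\geq 1$, I would use the boundary conditions: taking $y_0=-1$ in Lemma \ref{le2.2} and evaluating at $x=1$, the identity $\v_s'(1)=0$ forces $\int_{-1}^1(c(t)-\l(s))\v_s(t)e^{2s(m(t)-m(1))}\,dt=0$, whose integrand carries the strictly positive weight $\v_s(t)e^{2s(m(t)-m(1))}$. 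Since $c$ is non-constant (as in Lemma \ref{le2.1}), $c-\l(s)$ cannot keep a constant a.e.\ sign on $(-1,1)$, hence it changes sign, and such a change produces a transversal value; thus $q_s\geq 1$.

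The structural backbone is that $c-\l(s)$ keeps a constant a.e.\ sign on each open subinterval cut out by $\mathcal{T}_s$, the sign alternating across each $x_{t,j}$. I would prove this by recalling that on the open set $\{c\neq\l(s)\}$ the continuous function $c-\l(s)$ has locally constant strict sign, so an a.e.\ sign change can occur only at a point where $c-\l(s)$ crosses cleanly -- which is precisely a transversal value -- or across a subinterval on which $c\equiv\l(s)$. An accumulation of clean crossings is ruled out by (TC), while the plateau case is incompatible with the strict inequalities asserted below and, in any case, does not arise under the hypotheses of the individual parts, where the relevant sign is prescribed. This is the step I expect to be the main obstacle, since (TC) controls only the transversal crossings and the a.e.\ sign conventions adopted before Definition \ref{de2.1} must be handled with care.

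With the skeleton in hand, Parts (a) and (b) follow by gluing two applications of Lemma \ref{le2.3}. For (a), with $q_s=1$ and $c<\l(s)$ on $(-1,x_{t,1})$, transversality at $x_{t,1}$ together with $q_s=1$ forces $c>\l(s)$ on $(x_{t,1},1)$; then Lemma \ref{le2.3}(a) (with $y_0=-1$, $x_+=x_{t,1}$) gives $\v_s'<0$ on $(-1,x_{t,1}]$, and Lemma \ref{le2.3}(d) (with $y_0=1$, $x_-=x_{t,1}$) gives $\v_s'<0$ on $[x_{t,1},1)$, whence $\v_s'<0$ on $(-1,1)$. Part (b) is the mirror image, using parts (c) and (b) of Lemma \ref{le2.3}. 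Parts (c)(i) and (c)(ii) are the same arguments localized to the first and last subintervals, where the required sign of $c-\l(s)$ is now part of the hypotheses, so a single invocation of Lemma \ref{le2.3} suffices (cases (a)/(c) anchored at $y_0=-1$ for the first subinterval, cases (d)/(b) anchored at $y_0=1$ for the last).

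Finally, for (c)(iii), suppose a critical point $y_{0,j}\in(x_{t,j},x_{t,j+1})$ exists. If $c<\l(s)$ on $(x_{t,j},x_{t,j+1})$, I would apply Lemma \ref{le2.3}(b) with $y_0=y_{0,j}$, $x_-=x_{t,j}$ to obtain $\v_s'>0$ on $[x_{t,j},y_{0,j})$, and Lemma \ref{le2.3}(a) with $y_0=y_{0,j}$, $x_+=x_{t,j+1}$ to obtain $\v_s'<0$ on $(y_{0,j},x_{t,j+1}]$, which is exactly \eqref{ii.7} and exhibits $y_{0,j}$ as a strict local maximum; the case $c>\l(s)$ yields \eqref{ii.8} and a strict local minimum via parts (d) and (c) of Lemma \ref{le2.3}. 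Uniqueness is then automatic: the sign pattern just obtained shows that $\v_s'$ vanishes nowhere on $[x_{t,j},x_{t,j+1}]$ except at $y_{0,j}$, so a second critical point would contradict it. Since $\v_s$ is strictly monotone on each side of $y_{0,j}$ inside the subinterval, $y_{0,j}$ is the global extremum of $\v_s$ on $[x_{t,j},x_{t,j+1}]$, of the asserted type.
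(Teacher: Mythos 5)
Your proposal is correct and follows essentially the same route as the paper: finiteness of $\mathcal{T}_s$ from {\rm (TC)}, parts (a), (b), (c)(i)--(ii) via the very same invocations of Lemma \ref{le2.3} anchored at $y_0=\mp 1$ with $x_{\pm}$ the extreme transversal points, and (c)(iii) via the sign of the representation \eqref{ii.3} anchored at $y_{0,j}$ (your Lemma \ref{le2.3} applications with $y_0=y_{0,j}$ amount to exactly that), with uniqueness read off from the resulting sign pattern. The only difference is that you make explicit, and honestly flag as delicate, the sign-alternation skeleton of $c-\l(s)$ and the justification of $q_s\geq 1$, steps which the paper's proof treats as immediate consequences of {\rm (TC)} without further comment.
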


\begin{proof}
The first assertion of the statement is a direct consequence of the transversality condition (TC). Suppose
$q_s=1$ and $c<\l(s)$ in $(-1,x_{t,1})$. Then, $c>\l(s)$ in $(x_{t,1},1)$. Thus, by applying Lemma \ref{le2.3}(a) with $y_0=-1$ and $x_+=x_{t,1}$, it is apparent that $\v_s'(x)<0$ for all $x\in (-1,x_{t,1}]$. Similarly, by Lemma \ref{le2.3}(d) with $x_-=x_{t,1}$ and $y_0=1$,
it becomes apparent that $\v'_s(x)<0$ for all $x\in [x_{t,1},1)$, because $c>\l(s)$ there in. This concludes the proof of Part (a).
\par
Now, suppose that $q_s=1$ and $c>\l(s)$ in $(-1,x_{t,1})$. Then, $c<\l(s)$ in $(x_{t,1},1)$. Thus, by applying Lemma \ref{le2.3}(c) with $y_0=-1$ and $x_+=x_{t,1}$, it is apparent that $\v_s'(x)>0$ for all $x\in (-1,x_{t,1}]$. Similarly, by Lemma \ref{le2.3}(b) with $x_-=x_{t,1}$ and $y_0=1$,
it becomes apparent that $\v'_s(x)>0$ for all $x\in [x_{t,1},1)$, because $c<\l(s)$ there in, which ends  the proof of Part (b).
\par
For the rest of this proof, we assume that $q_s\geq 2$. Suppose that
$c<\l(s)$ in the interval $(-1,x_{t,1})$. Then, by Lemma \ref{le2.3}(a) with $y_0=-1$ and $x_+=x_{t,1}$,
we find that $\v_s'(x) <0$ for all $x\in (-1,x_{t,1}]$. Similarly, when $c>\l(s)$  in $(-1,x_{t,1})$,
the fact that $\v_s'(x) >0$ for all $x\in (-1,x_{t,1}]$ is a direct consequence of Lemma \ref{le2.3}(c).
This ends the proof of Part (c)(i). Part (c)(ii) follows readily by applying  Lemma \ref{le2.3}(b) and (d)  with $y_0=1$ and $x_-=x_{t,q_s}$.
\par
Lastly, suppose that, for some $j\in \{1,...,q_s-1\}$, there is some $y_{0,j}\in (x_{t,j},x_{t,j+1})$
such that $\v'_s(y_{0,j})=0$, and that $c<\l(s)$ in $(x_{t,j},x_{t,j+1})$. Then, owing to \eqref{ii.3},
it is apparent that \eqref{ii.7} holds. Similarly, when $c>\l(s)$ in $(x_{t,j},x_{t,j+1})$, \eqref{ii.8} holds. Since \eqref{ii.7} and \eqref{ii.8} entail the uniqueness of the critical point of $\v_s$ in $(x_{t,j},x_{t,j+1})$, the proof is complete.
\end{proof}

Naturally, when $c(x)$ is monotone, then $q_s=1$ and hence, by Theorem \ref{th2.1}(a), $\v_s'(x)<0$
for all $x\in (-1,1)$ if $c$ is non-decreasing in $[-1,1]$, whereas $\v_s'(x)>0$
for all $x\in (-1,1)$ if $c$ is non-increasing in $[-1,1]$.
\par
According to Part (c)(iii), $\v_s$ has, at most, a unique critical point in
the interval $[x_{t,j},x_{t,j+1}]$. Whether, or not, $\v_s$ admits some critical point in the interval
$(x_{t,j},x_{t,j+1})$ depends on the particular choices of $c(x)$ and $m(x)$. The value of $\l(s)$ might severely affect the number of admissible critical points of the normalized eigenfunction $\v_s$, of course.  Anyway, the oscillatory behavior of $\v_s$ can be ascertained with the following argument.
\par
According to Theorem \ref{th2.1}(a,b), the eigenfunction $\v_s$ is strictly monotone if $q_s=1$.
Now, suppose that $q_s=2$ and that, e.g., $c>\l(s)$ in $(-1,x_{t,1})$. Then, $c<\l(s)$ in
$(x_{t,1},x_{t,2})$, and $c>\l(s)$ in $(x_{t,2},1)$. Thus, by Theorem \ref{th2.1}(c)(i), we have that
$\v_s'(x)>0$ for all $x\in (-1,x_{t,1}]$. Similarly, owing to Theorem \ref{th2.1}(c)(ii),
$\v_s'(x)<0$ for all $x\in [x_{t,2},1)$. In particular,
$$
   \v_s'(x_{t,1})>0 \;\; \hbox{and}\;\;  \v_s'(x_{t,2})<0.
$$
Thus, there exists $y_{0,1}\in (x_{t,1},x_{t,2})$ such that $\v_s'(y_{0,1})=0$. Therefore,
by Theorem \ref{th2.1}(c), $y_{0,1}$ is the unique critical point
of $\v_s$ in $[x_{t,1},x_{t,2}]$ and
$$
  \v_s'(x) \left\{ \begin{array}{l}  > 0 \;\;\hbox{for all}\;\; x\in [x_{t,1},y_{0,1}),\\
  < 0 \;\;\hbox{for all}\;\; x\in (y_{0,1},x_{t,2}).\end{array}\right.
$$
In particular,
$$
  \v_s(y_{0,1})=\|\v_s\|_\infty =1.
$$
This argument can be easily adapted to show that, as soon as $q_s=2$ and $c<\l(s)$ in $(-1,x_{t,1})$,
there exists a unique $y_{0,1}\in (x_{t,1},x_{t,2})$ such that
$$
  \v_s'(x) \left\{ \begin{array}{l} < 0 \;\;\hbox{for all}\;\; x\in (-1,y_{0,1}),\\
  > 0 \;\;\hbox{for all}\;\; x\in (y_{0,1},1).\end{array}\right.
$$
In particular,
$$
   \v_s(y_{0,1})=(\v_s)_L=\min_{[-1,1]}\v_s.
$$
Now, suppose that $q_s=3$ and that $c>\l(s)$ in $(-1,x_{t,1})$. Then, $c<\l(s)$ in
$(x_{t,1},x_{t,2})$, $c>\l(s)$ in $(x_{t,2},x_{t,3})$, and $c<\l(s)$ in $(x_{t,3},1)$. Thus, due to
Theorem \ref{th2.1},
\begin{equation*}
  \v_s'(x) \left\{ \begin{array}{l} >0 \;\;\hbox{for all}\;\; x\in (-1,x_{t,1}], \\
  > 0 \;\;\hbox{for all}\;\; x\in [x_{t,3},1).\end{array}\right.
\end{equation*}
Suppose, in addition, that there exists $y_{0,1}\in (x_{t,1},x_{t,2})$ such that $\v_s'(y_{0,1})=0$. Then,
thanks to Theorem \ref{th2.1}, $y_{0,1}$ is unique and, since $c<\l(s)$ in $(x_{t,1},x_{t,2})$, we find that
\begin{equation*}
\v_s'(x) \left\{\begin{array}{l} >0 \;\;\hbox{for all}\;\; x\in[x_{t,1},y_{0,1}),\\
<0 \;\;\hbox{for all}\;\; x\in(y_{0,1},x_{t,2}].\end{array}\right.
\end{equation*}
Furthermore, since $\v_s'(x_{t,2})<0$ and $\v_s'(x_{t,3})>0$, there exists $y_{0,2}\in (x_{t,2},x_{t,3})$
such that $\v_s'(y_{0,2})=0$, and, due to Theorem \ref{th2.1}, $y_{0,2}$ is unique and
\begin{equation*}
\v_s'(x) \left\{\begin{array}{l} <0 \;\;\hbox{for all}\;\; x\in[x_{t,2},y_{0,2}),\\
>0 \;\;\hbox{for all}\;\; x\in(y_{0,2},x_{t,3}].\end{array}\right.
\end{equation*}
Therefore, $y_{0,2}$ it is a local minimum in $[x_{t,2},x_{t,3}]$.
\par
Another admissible situation in case $q_s=3$ is that $\v_s'(x)>0$ for all $x\in(-1,x_{t,2}]$. In such case,
since $\v_s'(x_{t,3})>0$ and $\v_s$ admits, at most, a unique critical point in $(x_{t,2},x_{t,3})$, it is
apparent that $\v_s'(x)>0$ for all $x\in (-1,1)$ and hence, $\v_s$ is strictly increasing, much like occurs
in the case $q_s=1$.
\par
At this stage of the discussion the reader should be able to infer very easily what are the admissible profiles of $\v_s$ when $q_s=3$ and $c<\l(s)$ in $(-1,x_{t,1})$, and, more generally, when
$q_s\geq 4$. Essentially, when $q_s$ is odd, in the end intervals, $(-1,x_{t,1})$ and $(x_{t,q_s},1)$, $\v_s$
has identical character, while in the even interior intervals $\v_s$ admits up to $\frac{q_s-1}{2}$ maxima
separated by identical number of minima. Similarly, when $q_s$ is even, in the end intervals, $(-1,x_{t,1})$ and $(x_{t,q_s},1)$, $\v_s$ has opposite character, while in the $q_s-1$ central intervals $\v_s$ exhibits
up to $q_s-1$ critical points, alternating the local maxima with the local minima, which are unique
on each of the nodal intervals of $c-\l(s)$ where they exist.

\section{Stabilization of $\v_s$ as $s \uparrow \infty$ }

\noindent The main result of this section can be stated as follows.

\begin{theorem}
\label{th3.1}
Suppose $m$ satisfies {\rm (Hm)}. Then, for every $x\in [-1,1]$,
\begin{equation}
\label{iii.1}
\lim_{s \rightarrow \infty}\v_s'(x) =0\quad \hbox{and}\quad \lim_{s \rightarrow \infty}
\v_s(x)=1,
\end{equation}
uniformly in $[-1,1]$.
\end{theorem}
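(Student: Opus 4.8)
The plan is to prove the assertion about $\v_s'$ first, and then deduce $\v_s\to 1$ from it together with the normalization \eqref{1.2}. Throughout I will use that $\v_s>0$ (principal eigenfunction) and $\|\v_s\|_\infty=1$, so that $0<\v_s\le 1$, and that, by Lemma \ref{le2.1}, $\l(s)$ stays in the bounded interval $[c_L,c_M]$; hence $|c(t)-\l(s)|\le c_M-c_L$ for every $t$ and $s$. If $c$ is constant there is nothing to prove, so I assume $c$ non-constant.

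The key is the pair of integral representations \eqref{ii.4}. For $x\in[-1,x_0]$ I would use the first one, and for $x\in[x_0,1]$ the second, precisely because this makes the exponent nonpositive: by (Hm), since $m'\ge 0$ on $[-1,x_0)$ vanishes at only finitely many points, $m$ is strictly increasing on $[-1,x_0]$ and, symmetrically, strictly decreasing on $[x_0,1]$. Thus for $-1\le t\le x\le x_0$ one has $m(t)-m(x)\le 0$, so from \eqref{ii.4},
$$
|\v_s'(x)|\le (c_M-c_L)\int_{-1}^x e^{2s(m(t)-m(x))}\,dt,
$$
and analogously, for $x_0\le x\le t\le 1$, $m(t)-m(x)\le 0$ yields $|\v_s'(x)|\le (c_M-c_L)\int_x^1 e^{2s(m(t)-m(x))}\,dt$. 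It then remains to show that these exponential integrals tend to $0$ uniformly in $x$.

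The main point, and the place where the degeneracy of $m$ must be confronted, is a uniform Laplace-type estimate. Fix $\delta>0$ and split $\int_{-1}^x=\int_{-1}^{x-\delta}+\int_{x-\delta}^x$ (the first piece being empty when $x-\delta<-1$, in which case the whole integral is at most $\delta$). On the near piece the integrand is $\le 1$, so its contribution is at most $\delta$. On the far piece, for $t\le x-\delta$ strict monotonicity gives $m(t)-m(x)\le m(x-\delta)-m(x)\le -\eta(\delta)$, where
$$
\eta(\delta):=\min_{x\in[-1+\delta,\,x_0]}\big(m(x)-m(x-\delta)\big)>0,
$$
the minimum being strictly positive because $m$ is strictly increasing and continuous on a compact interval. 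Here lies the whole subtlety: even if $m$ has a highly degenerate critical point, the increment of $m$ over a full interval of length $\delta$ is uniformly bounded below, so no hypothesis on the order of vanishing of $m'$ is ever needed. Consequently
$$
\int_{-1}^x e^{2s(m(t)-m(x))}\,dt\le \delta+2\,e^{-2s\eta(\delta)}\qquad (x\in[-1,x_0]),
$$
and the analogous bound $\delta+2e^{-2s\tilde\eta(\delta)}$ holds on $[x_0,1]$, with $\tilde\eta(\delta):=\min_{x\in[x_0,\,1-\delta]}(m(x)-m(x+\delta))>0$. Given $\e>0$, choosing $\delta=\e/2$ and then $s$ large enough makes both right-hand sides $<\e$ uniformly in $x$; hence $\|\v_s'\|_\infty\to 0$.

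Finally, $\v_s\to 1$ follows softly. By \eqref{1.2} there is a point $x_s^\ast\in[-1,1]$ with $\v_s(x_s^\ast)=1$, so for every $x\in[-1,1]$,
$$
|\v_s(x)-1|=\Big|\int_{x_s^\ast}^x \v_s'(t)\,dt\Big|\le 2\,\|\v_s'\|_\infty,
$$
whence $\|\v_s-1\|_\infty\le 2\,\|\v_s'\|_\infty\to 0$. The only genuine difficulty is the uniform estimate of the previous paragraph; the remainder is the elementary combination of the integral representation \eqref{ii.4}, the bounds $0<\v_s\le 1$ and $\l(s)\in[c_L,c_M]$, and the fundamental theorem of calculus.
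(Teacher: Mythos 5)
Your proof is correct, and it takes a genuinely different --- and more unified --- route than the paper's. The paper proves $\|\v_s'\|_\infty\to 0$ by a case analysis on the number and location of the critical points of $m$ (four cases, with Case~1 itself split into four steps): away from critical points it bounds $m(x)-m(t)\geq m'_{L,\d}(x-t)$ via the mean value theorem, which requires $m'$ to be bounded below by a positive constant on compact sets free of critical points, and near each critical point it uses only the smallness of the interval, exactly as you do; the additional cases exist precisely because extra (possibly degenerate, possibly boundary) critical points destroy the positive lower bound on $m'$. Your argument eliminates the case analysis entirely by replacing the pointwise lower bound on $m'$ with a lower bound on the \emph{increment} of $m$: (Hm) together with the finiteness of the critical set forces $m$ to be strictly increasing on $[-1,x_0]$ and strictly decreasing on $[x_0,1]$, so $\eta(\d)=\min_{x\in[-1+\d,\,x_0]}\bigl(m(x)-m(x-\d)\bigr)$ is positive by compactness, no matter how degenerate the critical points are or where they lie (including at $\pm 1$, the paper's Case~4). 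One Laplace-type estimate then covers all of the paper's cases at once, and your choice of which representation in \eqref{ii.4} to use on each side of $x_0$ keeps the exponent nonpositive, just as in the paper's Steps~2 and~4; the deduction $\|\v_s-1\|_\infty\le 2\|\v_s'\|_\infty$ from the normalization \eqref{1.2} is identical to the paper's. What each approach buys: the paper's stepwise mean-value bounds are more explicit on the regions away from critical points (an $O(1/s)$ rate there), while yours buys brevity and complete uniformity in the structure of the critical set --- which is, after all, the point of the theorem. Two cosmetic remarks: your final bound is $(c_M-c_L)\,\e$ rather than $\e$ (harmless, since the constant is fixed), and the separate treatment of constant $c$ could be avoided by using $|c(t)-\l(s)|\le 2\|c\|_\infty$ as the paper does.
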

\begin{proof}
By the assumptions on $m$, there exists $x_0\in (-1,1)$ such that $m(x_0)=m_M$,
$m'(x) \geq 0$ for all $x \in [-1,x_0)$, $m'(x) \leq 0$ for all $x \in (x_0,1]$ and $m$ possesses finitely many critical points in $[-1,1]$. We first prove that
\begin{equation}
\label{iii.17}
\lim_{s\to \infty}\|\v_s'\|_\infty =0.
\end{equation}
\noindent
We will divide the proof of \eqref{iii.17} in several cases, according to the number of critical points of $m(x)$.
\vspace{0.2cm}
\par
\par
\noindent \emph{Case 1.} In this case, we assume that $m'(x)>0$ for all $x \in [-1,x_0)$ and $m'(x)<0$ for all $x \in (x_0,1]$. Thus, $x_0$ is the unique critical point of $m(x)$ in $[-1,1]$. Next, we pick $\e>0$, fix $\d=\d(\e)$ such that
\begin{equation}
\label{iii.2}
0<\d <\min \left\{ \frac{\varepsilon}{4 \Vert c \Vert_{\infty}} ,1+x_0,1-x_0\right\},
\end{equation}
and divide the proof of \eqref{iii.17} into four steps.
\par
\vspace{0.2cm}

\noindent \emph{Step 1.} There exists $s_1=s_1(\varepsilon)>0$ such that
\begin{equation}
\label{iii.3}
\vert \v'_s(x) \vert< \e \;\; \hbox{for all}\;\; x \in [-1,x_0-\d] \;\; \hbox{and} \;\;  s>s_1(\e).
\end{equation}
Indeed, let $x \in (-1,x_0-\d]$ be and set
$$
m'_{L,\d}:=\min_{[-1,x_0-\d]} m'>0.
$$
Then, since $\|\v_s\|_\infty=1$, it follows from  \eqref{ii.3} with $y_0=-1$ and Lemma \ref{le2.1} that
\begin{equation}
\label{iii.4}
\vert \v_s'(x) \vert \leq \int_{-1}^x \vert c(t)-\l(s) \vert \v_s(t)e^{2s(m(t)-m(x))}dt \leq 2 \Vert c \Vert_{\infty}\int_{-1}^{x}e^{-2s(m(x)-m(t))}\,dt.
\end{equation}
On the other hand, thanks to the mean value theorem, for every $t\in (-1,x)$, there exists
$$
   \xi \in (t,x) \subset [-1,x_0-\d]
$$
such that
$$
   m(x)-m(t)=m'(\xi)(x-t)\geq m'_{L,\d}(x-t).
$$
Thus, for every $x \in (-1,x_0-\d]$,
\begin{align*}
\int_{-1}^{x}e^{-2s(m(x)-m(t))}\,dt & \leq \int_{-1}^x e^{-2sm'_{L,\d}(x-t)}\,dt \\ & = \frac{1-e^{-2s m'_{L,\d}(x+1)}}{2s m'_{L,\d}}
\leq \frac{1-e^{-2s m'_{L,\d}(x_0-\d+1)}}{2sm'_{L,\d}}.
\end{align*}
Hence, substituting in \eqref{iii.4}, we find that, for every $x \in (-1,x_0-\d]$,
\begin{equation}
\label{iii.5}
\vert \v'_s(x) \vert \leq \frac{\Vert c \Vert_{\infty}}{m'_{L,\d}}
\frac{1-e^{-2s m'_{L,\d}(x_0-\d+1)}}{s}.
\end{equation}
Therefore, since $m'_{L,\d}>0$ and, due to \eqref{iii.2},  $\d<1+x_0$, we find from \eqref{iii.5}
that there exists $s_1(\e)>0$ such that $\vert \v_s'(x) \vert < \varepsilon$ for all $s>s_1(\varepsilon)$ and $x \in (-1,x_0-\delta]$. So, since $\v'_s(-1)=0$, \eqref{iii.3} holds.
\par
\vspace{0.2cm}

\noindent \emph{Step 2.} There exists $s_2=s_2(\varepsilon)>s_1(\varepsilon)$ such that
\begin{equation}
\label{iii.6}
\vert \v_s'(x) \vert< \e \;\; \hbox{for all} \;\; x \in [x_0-\d,x_0] \;\; \hbox{and} \;\;   s>s_2(\e).
\end{equation}
Indeed, by applying \eqref{ii.3} with $y_0=-1$ and, since $x \in [x_0-\d,x_0]$, we find that
\begin{equation}
\label{iii.7}
\begin{split}
\v_s'(x) & = \int_{-1}^{x}(c(t)-\l(s)) \v_s(t) e^{2s(m(t)-m(x))}\,dt\\ & =
\v_s'(x_0-\d) +\int_{x_0-\d}^{x}(c(t)-\l(s)) \v_s(t) e^{2s(m(t)-m(x))}\,dt.
\end{split}
\end{equation}
By Step 1, we already know that there exists $s_2(\varepsilon)>s_1(\varepsilon)>0$ such that
\begin{equation}
\label{iii.8}
|\v_s'(x_0-\d)|<\frac{\varepsilon}{2}\;\; \hbox{for all}\;\; s>s_2(\e).
\end{equation}
On the other hand, since  $m'(x)>0$ for all $x \in (-1,x_0)$,  we have that
$$
   m(t) \leq m(x)\;\; \hbox{for all}\;\; t \in [x_0-\d, x] \subset (-1,x_0).
$$
Thus, $e^{2s(m(t)-m(x))}\leq 1$ for all $s>0$ and, hence, \eqref{iii.7} and \eqref{iii.8} guarantee that, for every  $x \in [x_0-\d,x_0]$,
\begin{equation*}
  |\v_s'(x)|\leq \frac{\e}{2}+ 2 \Vert c \Vert_{\infty} \int_{x_0-\d}^x e^{2s(m(t)-m(x))}\,dt\leq
  \frac{\e}{2}+ 2 \Vert c \Vert_{\infty} \d< \e,
\end{equation*}
by the choice of $\d$ in \eqref{iii.2}. Therefore, \eqref{iii.6} holds.
\par
\vspace{0.2cm}

\noindent\emph{Step 3.} There exists $s_3=s_3(\e)>s_2(\e)$ such that
\begin{equation}
\label{iii.9}
\vert \v_s'(x) \vert< \e \;\; \hbox{for all}\;\; x \in [x_0+\d,1] \;\; \hbox{and} \;\; s>s_3(\e).
\end{equation}
Indeed, let $x \in [x_0+\d,1)$ be and set
$$
m'_{M,\d}:=\max_{[x_0+\d,1]}m' <0.
$$
Adapting the argument of Step 1, it follows from \eqref{ii.3} with $y_0=1$ that
\begin{equation}
\label{iii.10}
\vert \v_s'(x) \vert \leq \int_{x}^1 \vert \l(s)-c(t) \vert \v_s(t)e^{2s(m(t)-m(x))}dt \leq 2 \Vert c \Vert_{\infty}\int_{x}^{1}e^{2s(m(t)-m(x))}\,dt.
\end{equation}
Moreover, by the mean value theorem,
\begin{equation}
\label{iii.11}
\int_{x}^{1}e^{2s(m(t)-m(x))}\,dt \leq \int_{x}^1 e^{2sm_{M,\d}^{'}(t-x)}\,dt=
\frac{e^{2sm_{M,\d}^{'}(1-x)}-1}{2sm_{M,\d}^{'}}.
\end{equation}
Therefore, since $m_{M,\d}^{'}<0$, it follows from \eqref{iii.10} and \eqref{iii.11} that
\begin{equation}
\label{iii.12}
\vert \v_s'(x) \vert \leq \frac{\Vert c \Vert_{\infty}}{\vert m'_{M,\d}\vert} \frac{1-e^{2s m'_{M,\d}(1-x)}}{s}\leq \frac{\Vert c \Vert_{\infty}}{\vert m_{M,\d}^{'}\vert} \frac{1-e^{2s m_{M,\d}^{'}(1-x_0-\d)}}{s}.
\end{equation}
Consequently, since $\v_s'(1)=0$ and $\d<1-x_0$, because of the choice of $\d$ in \eqref{iii.2}, it follows from \eqref{iii.12}  that there exists $s_3(\e)>s_2(\e)$ such that \eqref{iii.9} holds.
\par
\vspace{0.2cm}

\noindent \emph{Step 4.} There exists $s_4=s_4(\e)>s_3(\e)>0$ such that
\begin{equation}
\label{iii.13}
\vert \v_s'(x) \vert< \e \;\; \hbox{for all} \;\; x \in [x_0,x_0+\d] \;\;\hbox{and} \;\; s>s_4(\e).
\end{equation}
Indeed, owing to \eqref{ii.4} and since $x \in [x_0,x_0+\d]$, we have that
\begin{equation}
\label{iii.14}
\v_s'(x)= \int_{x}^{x_0+\d}(\l(s)-c(t)) \v_s(t) e^{2s(m(t)-m(x))}\,dt+\int_{x_0+\d}^{1}(\l(s)-c(t)) \v_s(t) e^{2s(m(t)-m(x))}\,dt.
\end{equation}
According to Step 3, there exists $s_4=s_4(\e)>s_3(\e)>0$ such that
\begin{equation}
\label{iii.15}
\left \vert \int_{x_0+\d}^{1}(\l(s)-c(t)) \v_s(t) e^{2s(m(t)-m(x))} \,dt \right \vert<\frac{\varepsilon}{2} \;\; \hbox{for all} \;\; s>s_4.
\end{equation}
On the other hand, since $m'(x)<0$ for all $x \in (x_0,1)$,
$$
   m(x)>m(t)\;\; \hbox{for all}\;\; t \in [x,x_0+\d] \subset (x_0,1).
$$
Thus, since $\|\v_s\|_\infty =1$, adapting the argument of Step 2,  it follows from Lemma \ref{le2.1} and \eqref{iii.2}, that
\begin{equation}
\label{iii.16}
\left \vert \int_x^{x_0+\d}(\l(s)-c(t))\v_s(t) e^{2s(m(t)-m(x))} \,dt\right \vert
\leq 2 \Vert c \Vert_{\infty} \int_x^{x_0+\d}e^{2s(m(t)-m(x))}\,dt\leq 2 \Vert c \Vert_{\infty}\d< \frac{\varepsilon}{2}.
\end{equation}
Lastly, \eqref{iii.13} follows from \eqref{iii.14}, \eqref{iii.15} and \eqref{iii.16}.
This ends the proof in \emph{Case 1}.
\vspace{0.2cm}
\par
\noindent \emph{Case 2.}  In this case, we assume that $m$ has two critical points, $x_0$ and $x_c$,  in $[-1,1]$, with $x_c \neq x_0$, and $x_c\in (-1,1)$. Then, either
$x_c < x_0$, or $x_c > x_0$. Pick $\e>0$ and fix
$\d=\d(\e)$ such that
\begin{equation}
\label{iii.20bis}
0<\d <\min \left\{ \frac{\varepsilon}{8 \Vert c \Vert_{\infty}} ,1+x_c,1-x_c\right\}.
\end{equation}
Suppose that $x_c<x_0$. Then, taking into account that
$$
\min_{[-1,x_c-\d]}m'>0,
$$
and adapting the proof of \emph{Step 1} in \emph{Case 1}, it readily follows the existence of $s_1=s_1(\e)>0$ such that
\begin{equation}
\label{iii.21}
\vert \v'_s(x) \vert < \e \quad \mbox{for all} \quad x\in  [-1,x_c-\d] \quad \mbox{and} \quad s>s_1(\e).
\end{equation}
Now, we will show that there exists $s_2=s_2(\e)>s_1(\e)>0$ such that
\begin{equation}
\label{iii.22}
\vert \v'_s(x) \vert < \e \quad \mbox{for all} \quad x\in  [x_c-\d,x_c+\d] \quad \mbox{and} \quad s>s_2(\e).
\end{equation}
Indeed, by applying \eqref{ii.3} with $y_0=-1$, we find that, for every  $x \in [x_c-\d,x_c+\d]$, \begin{equation}
\label{iii.23}
\begin{split}
\v_s'(x) & = \int_{-1}^{x}(c(t)-\l(s)) \v_s(t) e^{2s(m(t)-m(x))}\,dt\\ & =
\v_s'(x_c-\d) +\int_{x_c-\d}^{x}(c(t)-\l(s)) \v_s(t) e^{2s(m(t)-m(x))}\,dt.
\end{split}
\end{equation}
By \eqref{iii.21}, there exists $s_2=s_2(\e)>s_1(\e)>0$ such that
\begin{equation}
\label{iii.24}
\vert \v_s'(x_c-\d) \vert < \frac{\e}{2} \quad \mbox{for all} \quad s>s_2(\e).
\end{equation}
On the other hand, since $m'(x) \geq 0$ for all $x \in [x_c-\d,x_c+\d]$, we have that
$$
m(t) \leq m(x) \quad \mbox{for all} \quad t \in [x_c-\d,x] \subset [x_c-\d,x_c+\d].
$$
Thus, for every $s>0$, we find from \eqref{iii.20bis} that
\begin{equation}
\label{jlg}
  \left| \int_{x_c-\d}^{x}(c(t)-\l(s)) \v_s(t) e^{2s(m(t)-m(x))}\,dt\right|\leq
  4\|c\|_\infty\d  <\frac{\e}{2}.
\end{equation}
Consequently, \eqref{iii.22} holds from \eqref{iii.23}, \eqref{iii.24} and \eqref{jlg}.
\par
Finally, since $m'(x_c+\d)>0$ and $m'(1)<0$, the existence of $s_3=s_3(\e)>s_2(\e)>0$ such that $\vert \v_s'(x) \vert < \e$ for all $x \in [x_c+\d,1]$ can be obtained by adapting to the interval $[x_c+\d,1]$ the argument of the proof of  \emph{Case 1} delivered in the interval $[-1,1]$. Note that one should divide the interval $[x_c+\d,1]$ into  the subintervals $[x_c+\d,x_0-\d^*]$, $[x_0-\d^*,x_0]$, $[x_0,x_0+\d^*]$ and $[x_0+\d^*,1]$ for  sufficiently small $\d^*>0$.
This ends the proof of \emph{Case 2} when $x_c<x_0$. By making some (obvious) changes, this proof can be easily adapted to cover the case when $x_c>x_0$. The proof of \emph{Case 2} is complete.
\par
\vspace{0.2cm}

\noindent \emph{Case 3}. In this case, we assume that $m'(\pm 1)\neq 0$ and that the set $\mathscr{C}$ of critical points of $m$ in $(-1,1)$ is
\begin{equation}
\label{iii.220}
\mathscr{C}=C_\ell \cup \left\{ x_0 \right\} \cup C_r,
\end{equation}
with either $C_\ell=\emptyset$, or $C_\ell=\left\{ x_{\ell_j,c}:\; 1\leq j\leq p\right\}$
for some $p\in \mathbb{N}$, and $C_r=\emptyset$, or $C_r=\left\{ x_{r_k,c}:\; 1\leq k\leq q\right\}$ for some $q\in \mathbb{N}$, and  $\mathscr{C}\setminus\{x_0\} = C_\ell \cup C_r \neq \emptyset$, with the convention that $x_{\ell_j,c}<x_{\ell_{j+1},c}$ and
$x_{r_{k+1},c}<x_{r_k,c}$. So, in the most general case when $C_\ell$ and $C_r$ are nonempty, we have that
\begin{equation}
\label{iii.223}
-1<\cdots<x_{\ell_j,c}<x_{\ell_{j+1},c}<\cdots<x_0<\cdots<x_{r_{k+1},c}<x_{r_k,c}<\cdots<1.
\end{equation}
Pick $\e>0$ and take $\d \in (0,\frac{\e}{8 \Vert c\Vert_{\infty}})$ sufficiently small so that
\begin{equation}
\label{iii.224}
(x_{p_1}-\d,x_{p_1}+\d) \cup (x_{p_2}-\d,x_{p_2}+\d) \subset (-1,1), \qquad (x_{p_1}-\d,x_{p_1}+\d) \cap (x_{p_2}-\d,x_{p_2}+\d) =\emptyset,
\end{equation}
for any pair of critical points $x_{p_1}, x_{p_2}\in \mathscr{C}$. Then, the proof of the existence of $s=s(\e)>0$ such that $\vert \v_s'(x) \vert <\e$ for all $x \in [-1,1]$ and $s>s(\e)>0$ can be accomplished as follows:
\begin{enumerate}
\item[(i)] arguing in each of the intervals $[x_{\ell_{j},c}-\d,x_{\ell_{j},c}+\d]$, $1\leq j\leq p$, as
in the proof of Case 2 in the interval $[x_c-\d,x_c+\d]$ when $x_c<x_0$, if $C_\ell$ is nonempty;

\item[(ii)] arguing in each of the intervals $[x_{r_{k},c}-\d,x_{r_{k},c}+\d]$, $1\leq k\leq q$, as
in the proof of Case 2 in the interval $[x_c-\d,x_c+\d]$ when $x_c>x_0$, if $C_r$ is nonempty;

\item[(iii)] arguing in the interval $[x_0-\d,x_0+\d]$ as in the proofs of Steps 2 and 4 of \emph{Case 1};

\item[(iv)] arguing in the complement of the set
$$
   \cup_{j=1}^p (x_{\ell_{j},c}-\d,x_{\ell_{j},c}+\d)\cup (x_0-\d,x_0+\d)\cup
     \cup_{k=1}^q (x_{r_{k},c}-\d,x_{r_{k},c}+\d)
$$
in $[-1,1]$ as, e.g.,  in the proof of \emph{Case 1} in $[-1,x_0-\d]\cup[x_0+\d,1]$.

\end{enumerate}
\par

\vspace{0.2cm}

\noindent \emph{Case 4}. In this case, besides the conditions of \emph{Case 3}, we assume, in addition, that $m'(-1)=0$, and/or $m'(1)=0$. Suppose $m'(-1)=0$. We claim that, for every $\e>0$, there exists $\d=\d(\e)>0$ such that, for sufficiently large $s>0$,
\begin{equation}
\label{iii.26}
\vert \v_s'(x) \vert <\e \qquad \mbox{for all} \quad x \in [-1,-1+\d].
\end{equation}
Indeed, choose a sufficiently small $\d>0$  so that $m$ cannot admit a critical point in $(-1,-1+\d]$ and \eqref{iii.224} holds. Then, by applying \eqref{ii.3} with $y_0=-1$ we find that, for every $x  \in [-1,-1+\d]$,
\begin{equation}
\label{ii.270}
\v_s'(x)=\int_{-1}^x(c(t)-\l(s))\v_s(t)e^{2s(m(t)-m(x))}\,dt.
\end{equation}
Since $m'(x) \geq 0$ for all  $x \in [-1,-1+\d]$, it is apparent that
\begin{equation}
\label{iii.28}
m(t)-m(x)\leq 0 \quad \mbox{for all} \quad t \in [-1,x]\subset [-1,-1+\d].
\end{equation}
Thus, by Lemma \ref{le2.1}, it follows from  \eqref{ii.270} and \eqref{iii.28} that
$$
\vert \v_s'(x) \vert \leq 2 \Vert c \Vert_{\infty}\d.
$$
Shortening $\d$, if necessary, yields \eqref{iii.26}. A similar estimate holds in the interval $[1-\d,1]$
if $m'(1)=0$. The rest of the proof can be completed arguing as in the proof of \emph{Case 3}. This ends the proof of \eqref{iii.17}.
\par
\vspace{0.2cm}
To conclude the proof of the theorem, it remains to show that
\begin{equation}
\label{iii.29}
\lim_{s \rightarrow \infty}\v(s)=1 \qquad \mbox{uniformly in  } [-1,1].
\end{equation}
Indeed, since $\Vert \v_s \Vert_{\infty}=1$, there exists
$x_s \in [-1,1]$ such that $\v_s(x_s)=1$. Thus,
$$
\v_s(x)=\v_s(x_s)+\int_{x_s}^x \v'_s(\xi)\,d\xi=1+\int_{x_s}^x \v'_s(\xi)\,d\xi \;\;
\hbox{for all} \;\; x\in [-1,1].
$$
Hence,
\begin{equation}
\label{iii.18}
\vert \v_s(x)-1 \vert \leq \int_{\min\{x_s,x\}}^{\max\{x_s,x\}} \vert \v'_s(\xi) \vert \,d\xi \leq   \int_{-1}^{1} \vert \v'_s(\xi) \vert\,d\xi.
\end{equation}
Therefore,  \eqref{iii.29} holds from \eqref{iii.17} and \eqref{iii.18}. The proof is complete.

\end{proof}

\section{Asymptotic stabilization of $\l(s)$ as $s \uparrow \infty$ }

\noindent The main result of this section can be stated as follows.

\begin{theorem}
\label{th4.1}
Suppose $m(x)$ satisfies {\rm (Hm)}, and
\begin{enumerate}
\item[{\rm (Hc)}] the function $c(x)$ satisfies some of the following conditions:
\begin{enumerate}
\item[{\rm (a)}] there exists $y_0\in (-1,1)$ such that $c$ is increasing (resp. decreasing) in $[-1,y_0)$ and decreasing (resp. increasing) in $(y_0,1]$,

\item[{\rm (b)}] $c$ is monotone (either increasing or decreasing) in $[-1,1]$.
\end{enumerate}
\end{enumerate}
Then,
\begin{equation}
\label{4.1}
    \lim_{s \uparrow \infty}\l(s)=c(x_0).
\end{equation}
\end{theorem}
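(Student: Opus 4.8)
The plan is to sidestep any direct estimate of $\v_s''$ and to work instead with a single global identity produced by the Neumann condition at the right endpoint. Taking the critical point $y_0=-1$ in Lemma \ref{le2.2} and evaluating the resulting formula at $x=1$, the condition $\v_s'(1)=0$, after multiplying by $e^{2sm(1)}>0$, gives
\begin{equation*}
\int_{-1}^{1}\bigl(c(t)-\l(s)\bigr)\v_s(t)\,e^{2sm(t)}\,dt=0,
\end{equation*}
which is precisely the identity recorded at the end of Lemma \ref{le2.2}. Writing $w_s(t):=\v_s(t)e^{2sm(t)}>0$, this rearranges into the weighted-mean representation
\begin{equation*}
\l(s)=\frac{\int_{-1}^{1}c(t)\,w_s(t)\,dt}{\int_{-1}^{1}w_s(t)\,dt},
\end{equation*}
so that the whole of \eqref{4.1} reduces to showing that the normalized weight $w_s$ concentrates at $x_0$ as $s\ua\infty$.

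Next I would establish this concentration by a crude Laplace estimate anchored on the fact that, under (Hm), $x_0$ is the \emph{unique} global maximizer of $m$, so that for each $\d>0$ one has $M_\d:=\max_{|t-x_0|\ge\d}m(t)<m(x_0)$. By Theorem \ref{th3.1}, $\v_s\to1$ uniformly, hence there is $s_0$ with $\tfrac12\le\v_s\le1$ on $[-1,1]$ for all $s>s_0$; this traps $w_s$ between $\tfrac12 e^{2sm}$ and $e^{2sm}$ and eliminates $\v_s$ from the estimate. Choosing $\eta\in(0,m(x_0)-M_\d)$ and then $\d'>0$ so small that $m>m(x_0)-\eta$ on $(x_0-\d',x_0+\d')$, the full weight is bounded below by $\int_{-1}^{1}w_s\ge\d'\,e^{2s(m(x_0)-\eta)}$, while the outer mass obeys $\int_{|t-x_0|\ge\d}w_s\le 2e^{2sM_\d}$. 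Dividing, the fraction of mass of $w_s$ outside $(x_0-\d,x_0+\d)$ is at most $\tfrac{2}{\d'}e^{2s(M_\d-m(x_0)+\eta)}\to0$.

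To conclude I would write
\begin{equation*}
\l(s)-c(x_0)=\frac{\int_{-1}^{1}\bigl(c(t)-c(x_0)\bigr)w_s(t)\,dt}{\int_{-1}^{1}w_s(t)\,dt},
\end{equation*}
and split the numerator at $|t-x_0|=\d$: the inner part is bounded by $\sup_{|t-x_0|<\d}|c(t)-c(x_0)|$ times the total weight, by the continuity of $c$, while the outer part is bounded by $2\|c\|_\infty$ times the outer mass computed above. Hence $\limsup_{s\ua\infty}|\l(s)-c(x_0)|\le\sup_{|t-x_0|<\d}|c(t)-c(x_0)|$ for every $\d>0$, and letting $\d\da0$ yields \eqref{4.1}. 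The constant-$c$ case is trivial, since then the numerator equals $c$ times the denominator.

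The step I expect to be most delicate is the lower bound on $\int_{-1}^{1}w_s$, because for the highly degenerate profiles of interest (such as $m=1-x^{n}$ with $n$ large and even) the weight is nearly flat near $x_0$. The decisive observation, though, is that only the strict inequality $m(t)<m(x_0)$ for $t\ne x_0$ is used, and no nondegeneracy of $m$ at $x_0$ is invoked; degeneracy merely slows a rate that is irrelevant for the limit. I also note that this route appears to use only the continuity of $c$ together with Theorem \ref{th3.1}, and hence does not seem to need the structural hypothesis (Hc).
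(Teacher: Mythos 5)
Your proof is correct, and it takes a genuinely different---and more general---route than the paper's. The paper argues by contradiction along sequences $s_n\ua\infty$: Lemma \ref{le2.1} gives $\l_n\to\l_\infty=c(x_\infty)$, one assumes $c(x_\infty)\neq c(x_0)$, and a lengthy case analysis on the root structure of $c(x)=\l_\infty$ (one transversal root, two transversal roots, a non-transversal root, no roots) combines the shape results of Theorem \ref{th2.1} with integrations of the differential equation over intervals such as $[x_n,x_0]$, using Theorem \ref{th3.1} to kill the boundary terms, so as to reach a sign contradiction in the advection integral $-2s_n\int m'\v_n'\,dx$; hypothesis (Hc), together with (TC), is precisely what keeps that case analysis finite and fixes the sign of $\v_n'$ on the relevant intervals. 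You bypass all of that: your identity $\int_{-1}^{1}(c(t)-\l(s))\v_s(t)e^{2sm(t)}\,dt=0$ is the $y_0=-1$ instance of the final display of Lemma \ref{le2.2} (equivalently, integrate the divergence form $-(e^{2sm}\v_s')'+ce^{2sm}\v_s=\l(s)e^{2sm}\v_s$ and use the Neumann conditions), it exhibits $\l(s)$ as a weighted average of $c$ against $w_s=\v_s e^{2sm}>0$, and your Laplace estimate concentrates this average at $x_0$. Both ingredients of the concentration step are sound: (Hm) does force $x_0$ to be the unique global maximizer of $m$ on all of $[-1,1]$ (if $m$ attained $\|m\|_\infty$ at another point, the monotonicity of $m$ on $[-1,x_0]$ and on $[x_0,1]$ would make $m$ constant on a subinterval, contradicting the uniqueness of $x_0$ in $(-1,1)$---the one point you leave implicit), so $\max_{|t-x_0|\ge\d}m<m(x_0)$; and Theorem \ref{th3.1}, which requires only (Hm), traps $\v_s$ in $[\tfrac12,1]$ for large $s$, so the eigenfunction never interferes with the exponential rates. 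The payoff is substantial: your argument uses only (Hm), the continuity of $c$, and Theorem \ref{th3.1}, so it establishes \eqref{4.1} for \emph{every} $c\in\mc{C}([-1,1];\R)$, rendering (Hc) and (TC) superfluous and settling affirmatively the question the Introduction leaves open (where the authors state they could not prove \eqref{1.10} except under (Hc)). What the paper's longer route buys instead is the finer qualitative description of $\v_s$ in Section 2 (monotonicity regions, uniqueness and nature of critical points), which has independent interest but is not needed for the limit \eqref{4.1}.
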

\begin{proof}
We will show  that
\begin{equation}
\lim_{n \rightarrow \infty} \l(s_n)=c(x_0).
\end{equation}
for every sequence of real numbers $\{ s_n\}_{n\geq 1}$ such that
\begin{equation}
\label{4.2}
\lim_{n \rightarrow \infty }s_n=\infty.
\end{equation}
Subsequently, we fix a sequence satisfying \eqref{4.2} and set
$$
   \l_n:=\l(s_n), \qquad \v_n:=\v_{s_n},\qquad n\geq 1.
$$
By Lemma \ref{le2.1}, $\l_n \in (c_L,c_M)$ for all $n\geq 1$. Thus, there is a subsequence of $\l_n$,
relabeled by $n$, such that
\begin{equation}
\label{4.3}
\lim_{n \rightarrow \infty}\l_n=\lambda_{\infty}\in [c_L,c_M].
\end{equation}
By the continuity of $c$, there exists $x_{\infty}\in [-1,1]$ such that
\begin{equation}
\label{4.4}
\l_{\infty}=c(x_{\infty}).
\end{equation}
To complete the proof, it suffices to show that $x_\infty=x_0$. Indeed, as we can repeat the same
argument along any subsequence, this implies \eqref{4.1}.  Our argument will proceed by contradiction. So, assume that
\begin{equation}
\label{4.5}
  \l_\infty=c(x_\infty)\neq c(x_0).
\end{equation}
We will reach a contradiction by steps, according to the structure of the roots of  $c(x) = \l_{\infty}$ in $(-1,1)$.  According to  \eqref{4.3} and the profile of $c$, either $c(x) = \l_{\infty}$ possesses a unique transversal root in $(-1,1)$, or it possesses two transversal roots, or it possesses a unique root which is not transversal, or it has no roots in $(-1,1)$. Next, we will deliver the technical details of the proof when $c(x)$ satisfies (Hc)(a). The proof in the simplest case when $c(x)$ satisfies (Hc)(b) is omitted by repetitive.
\par
\vspace{0.2cm}
\noindent\emph{Case 1.} Suppose that $c(x) = \l_{\infty}$ has a unique transversal root in $(-1,1)$, $x_{\infty}$. Thanks to \eqref{4.4} and \eqref{4.5},  $x_{\infty} \neq x_0$. Thus, either
\begin{equation}
\label{4.6}
x_{\infty}<x_0,
\end{equation}
or
\begin{equation}
\label{4.7}
x_{\infty}>x_0.
\end{equation}
Moreover, either
\begin{equation}
\label{4.8}
c<\l_{\infty} \;\;  \hbox{in} \;\; [-1,x_{\infty}) \;\; \hbox{and} \;\;
c>\l_{\infty} \;\; \hbox{in} \;\; (x_{\infty},1],
\end{equation}
or
\begin{equation}
\label{4.9}
c>\l_{\infty} \;\;  \hbox{in} \;\; [-1,x_{\infty}) \;\; \hbox{and} \;\;
c<\l_{\infty} \;\; \hbox{in} \;\; (x_{\infty},1].
\end{equation}
By \eqref{4.3}, the continuity of $c(x)$ and the transversality of $x_{\infty}$,
there is a sequence, $\{x_{n}\}_{n\geq 1}$, in $(-1,1)$ such that
\begin{equation}
\label{4.10}
\lim_{n \rightarrow \infty}x_n=x_{\infty}\;\;\hbox{and}\;\; c(x_n)=\l_n\;\;\hbox{for all}\;\; n\geq 1.
\end{equation}
Moreover,  $x_n$ is the unique transversal root of $c(x)=\l_n$ for sufficiently large $n$.
\par
Assume that \eqref{4.6} and \eqref{4.8} hold. Then, by \eqref{4.6} and \eqref{4.10}, there exists
$n_0 \in \mathbb{N}$ such that
\begin{equation*}
x_n<x_0 \;\; \hbox{for all}\;\; n\geq n_0.
\end{equation*}
Moreover, thanks to \eqref{4.10} and \eqref{4.8}, we have that, for sufficiently large $n$, say
$n_1\geq n_0$,
\begin{equation}
\label{4.11}
c<\l_n \;\; \hbox{in} \;\;  [-1,x_n) \;\; \hbox{and} \;\;
c>\l_n \;\; \hbox{in} \;\; (x_n,1].
\end{equation}
Thanks to (Hm), we have that
\begin{equation}
\label{4.12}
m'\gneq 0 \;\; \hbox{in} \;\; [x_n,x_0]\;\; \hbox{for all}\;\; n\geq n_1.
\end{equation}
Moreover, by Theorem \ref{th2.1}(a), \eqref{4.11} entails
\begin{equation}
\label{4.13}
\v_n'(x)<0 \quad \hbox{for all} \;\; x \in (-1,1).
\end{equation}
On the other hand, integrating the differential equation of \eqref{1.1} in  $[x_n,x_0]$, we find that
\begin{equation}
\label{4.14}
\v_n'(x_n)-\v_n'(x_0) -2s_n\int_{x_n}^{x_0}m'(x)\v_n'(x)\,dx=\int_{x_n}^{x_0}(\l_n-c(x))\v_n(x)\,dx
\end{equation}
for all $n \geq n_1$. Thus, letting $n \rightarrow \infty$ in the right hand side of \eqref{4.14}, it follows from Theorem \ref{th3.1}, \eqref{4.3}, \eqref{4.8} and \eqref{4.10} that
\begin{equation}
\label{4.15}
\lim_{n \rightarrow \infty}\int_{x_n}^{x_0}(\l_n-c(x))\v_n(x)\,dx=\int_{x_{\infty}}^{x_0}(\l_{\infty}-c(x))\v_{\infty}(x)\,dx
=\int_{x_{\infty}}^{x_0}(\l_{\infty}-c(x))\,dx<0.
\end{equation}
Moreover,  also by Theorem \ref{th3.1},
\begin{equation}
\label{4.16}
\lim_{n \rightarrow \infty}\left(\v_n'(x_n)-\v_n'(x_0)\right)=0.
\end{equation}
Therefore, due to \eqref{4.14}, \eqref{4.15} and \eqref{4.16}, we find that, for sufficiently large $n$,
\begin{equation*}
-2s_n\int_{x_n}^{x_0}m'(x)\v_n'(x)\,dx<0,
\end{equation*}
which contradicts \eqref{4.12} and \eqref{4.13}. So, \eqref{4.6} and \eqref{4.8} cannot occur.
\par
Suppose \eqref{4.6} and \eqref{4.9}. Then, thanks to Theorem \ref{th2.1}(b), we find that
\begin{equation}
\label{4.17}
\v_n'(x)>0 \quad \hbox{for all} \;\; x \in (-1,1)
\end{equation}
for sufficiently large $n$. Moreover,
$$
  \lim_{n \rightarrow \infty}\int_{x_n}^{x_0}(\l_n-c(x))\v_n(x)\,dx=\int_{x_{\infty}}^{x_0}(\l_{\infty}-c(x))\v_{\infty}(x)\,dx
=\int_{x_{\infty}}^{x_0}(\l_{\infty}-c(x))\,dx>0.
$$
Therefore, for sufficiently large $n$,
\begin{equation*}
-2s_n\int_{x_n}^{x_0}m'(x)\v_n'(x)\,dx>0,
\end{equation*}
which contradicts \eqref{4.12} and \eqref{4.17}. So, \eqref{4.6} and \eqref{4.9} cannot occur neither.
\par
Suppose \eqref{4.7} and \eqref{4.8} hold. Then, for sufficiently large $n$, say $n\geq n_2$,
we have that
\begin{equation*}
x_0<x_n \quad \hbox{for all}\;\; n\geq n_2.
\end{equation*}
Moreover, by enlarging $n_2$, if necessary, we can assume that
\begin{equation}
\label{4.18}
c<\l_n \;\; \hbox{in} \;\; [-1,x_n) \;\; \hbox{and} \;\;
c>\l_n \;\; \hbox{in} \;\;  (x_n,1],
\end{equation}
and, due to (Hm), since $x_\infty>x_0$, we have that
\begin{equation}
\label{4.19}
m' \lneq 0 \;\; \hbox{in} \;\; [x_0,x_n]\;\; \hbox{for all}\;\; n\geq n_2.
\end{equation}
By Theorem \ref{th2.1}(a), it follows from \eqref{4.18} that
\begin{equation}
\label{4.20}
\v_n'(x)<0 \;\; \hbox{for all} \;\; x \in (-1,1).
\end{equation}
Now, integrating the differential equation of \eqref{1.1} in  $[x_0,x_n]$, we find that
\begin{equation}
\label{4.21}
\v_n'(x_0)-\v_n'(x_n) -2s_n\int_{x_0}^{x_n}m'(x)\v_n'(x)\,dx=\int_{x_0}^{x_n}(\l_n-c(x))\v_n(x)\,dx
\end{equation}
for all $n \geq n_2$. Thus, letting $n \rightarrow \infty$ in the right hand side of \eqref{4.21}, Theorem \ref{th3.1}, \eqref{4.8} and \eqref{4.10} imply that
\begin{equation}
\label{4.22}
\lim_{n \rightarrow \infty}\int_{x_0}^{x_n}(\l_n-c(x))\v_n(x)\,dx=\int_{x_0}^{x_{\infty}}(\l_{\infty}-c(x))\v_{\infty}(x)\,dx
=\int_{x_0}^{x_{\infty}}(\l_{\infty}-c(x))\,dx>0.
\end{equation}
On the other hand, by Theorem \ref{th3.1},
\begin{equation}
\label{4.23}
\lim_{n \rightarrow \infty}\left(\v_n'(x_0)-\v_n'(x_n) \right)=0.
\end{equation}
Consequently, by  \eqref{4.21}, \eqref{4.22} and \eqref{4.23}, it is apparent that, for sufficiently large $n\geq n_2$,
\begin{equation*}
-2s_n\int_{x_0}^{x_n}m'(x)\v_n'(x)\,dx>0,
\end{equation*}
which contradicts \eqref{4.19} and \eqref{4.20}. Thus, \eqref{4.7} and \eqref{4.8} cannot occur.
\par
Finally, suppose \eqref{4.7} ($x_\infty>x_0$) and \eqref{4.9}. Then, by Theorem \ref{th2.1}(b), for sufficiently large $n$,
\begin{equation}
\label{4.24}
\v_n'(x)>0 \quad \hbox{for all} \;\; x \in (-1,1).
\end{equation}
Moreover,
$$
  \lim_{n \rightarrow \infty}\int_{x_0}^{x_n}(\l_n-c(x))\v_n(x)\,dx=\int_{x_0}^{x_{\infty}}(\l_{\infty}-c(x))\v_{\infty}(x)\,dx
=\int_{x_0}^{x_{\infty}}(\l_{\infty}-c(x))\,dx<0.
$$
Therefore, it follows from \eqref{4.21} that, for sufficiently large $n$,
\begin{equation*}
-2s_n\int_{x_0}^{x_n}m'(x)\v_n'(x)\,dx<0,
\end{equation*}
which contradicts \eqref{4.19} and \eqref{4.24}. This contradiction ends the proof of the theorem in Case 1.
\par
\vspace{0.2cm}

\noindent\emph{Case 2.} Suppose that $c(x) = \l_{\infty}$ possesses two transversal roots $x_{\infty,1}$,
$x_{\infty,2} \in (-1,1)$ such that
 \begin{equation*}
x_{\infty,1}<x_{\infty,2}\;\; \hbox{and}\;\; \l_\infty=c(x_{\infty,1})=c(x_{\infty,2})\neq c(x_0).
\end{equation*}
As in Case 1, by the continuity of $c(x)$ and the transversality of the roots $x_{\infty,1}$
and $x_{\infty,2}$, it follows from \eqref{4.3} that there are two sequences, $\{x_{n,1}\}_{n\geq 1}$ and $\{x_{n,2}\}_{n\geq 1}$, such that
\begin{equation}
\label{4.25}
c(x_{n,i})=\l_n \;\; \hbox{and}\;\; \lim_{n \rightarrow \infty}x_{n,i}=x_{\infty,i},\quad i=1,2,
\end{equation}
where $x_{n,1}$ and $x_{n,2}$ are the first and the last transversal roots of the equation $c(x)=\l_n$ in $(-1,1)$, respectively. Next, we will differentiate between several cases according to the relative
positions of $x_{\infty,1}$, $x_{\infty,2}$ and $x_0$. Suppose that
\begin{equation}
\label{4.26}
-1 < x_{\infty,1} < x_{\infty,2}<x_0.
\end{equation}
Thanks to \eqref{4.25} and \eqref{4.26}, there exists $n_1 \in \mathbb{N}$ such that
\begin{equation*}
x_{n,1} <x_{n,2} <x_0 \;\; \hbox{for all}\;\; n\geq n_1.
\end{equation*}
Thus, by assumption (Hm),
\begin{equation}
\label{4.27}
m'\gneq 0 \;\; \hbox{in}\; \; [x_{n,2},x_0]\;\; \hbox{for all}\;\; n\geq n_1.
\end{equation}
Subsequently, we will assume that $c(x)$ is increasing if $x<y_0$ and decreasing if $x>x_0$; the proof
when $c(x)$ is decreasing if $x<y_0$ and increasing if $x>x_0$ is  omitted by repetitive.
Then,  by Theorem \ref{th2.1}(c)(ii), we find that
\begin{equation}
\label{4.28}
\v_n'(x)>0 \;\; \hbox{for all} \;\; x \in [x_{n,2},x_0].
\end{equation}
Moreover, according to \eqref{4.26}, it becomes apparent that
\begin{equation}
\label{4.29}
c<\l_{\infty} \;\; \hbox{in} \;\; [x_{\infty,2},x_0].
\end{equation}
Integrating the differential equation of \eqref{1.1} in $[x_{n,2},x_0]$, we find that
\begin{equation}
\label{4.30}
\v_n'(x_{n,2})-\v_n'(x_0)-2s_n\int_{x_{n,2}}^{x_0}m'(x)\v_n'(x)\,dx=\int_{x_{n,2}}^{x_0}(\l_n-c(x))\v_n(x)\,dx
\end{equation}
for all $n \geq n_1$. Thus, letting $n \rightarrow \infty$ in the right hand side of \eqref{4.30},  it follows from \eqref{4.25}, Theorem \ref{th3.1} and \eqref{4.29} that
\begin{equation}
\label{4.31}
\lim_{n \rightarrow \infty}\int_{x_{n,2}}^{x_0}(\l_n-c(x))\v_n(x)\,dx=\int_{x_{\infty,2}}^{x_0}(\l_{\infty}-c(x))\v_{\infty}(x)\,dx
=\int_{x_{\infty,2}}^{x_0}(\l_{\infty}-c(x))\,dx>0.
\end{equation}
According to  Theorem \ref{th3.1},
\begin{equation}
\label{4.32}
\lim_{n \rightarrow \infty}\left(\v_n'(x_{n,2})-\v_n'(x_0) \right)=0.
\end{equation}
Thus, due to \eqref{4.30}, \eqref{4.31} and \eqref{4.32}, we find that, for sufficiently large $n$, \begin{equation}
\label{s5}
-2s_n\int_{x_{n,2}}^{x_0}m'(x)\v_n'(x)\,dx>0,
\end{equation}
which contradicts \eqref{4.27} and \eqref{4.28}. Therefore, \eqref{4.26} cannot occur. Similarly, in case
\begin{equation}
x_0<x_{\infty,1} < x_{\infty,2}< 1,
\end{equation}
there exists  $n_2 \in \mathbb{N}$ such that
\begin{equation}
x_0<x_{n,1}<x_{n,2} \;\; \hbox{for all} \;\; n\geq n_2.
\end{equation}
Thus, by (Hm),
\begin{equation}
\label{4.33}
m' \lneq 0 \;\; \hbox{in} \;\; [x_0,x_{n,1}] \;\; \hbox{for all} \;\; n\geq n_2.
\end{equation}
Moreover, since $c<\l_n$ in $[x_0,x_{n,1}]$, because, by construction,
\begin{equation}
\label{4.34}
c<\l_{\infty} \quad \hbox{in} \;\; [x_0,x_{\infty,1}],
\end{equation}
it follows from Theorem \ref{th2.1}(c)(i) that
\begin{equation}
\label{4.35}
\v_n'(x)<0 \;\; \hbox{for all} \;\; x \in [x_0,x_{n,1}].
\end{equation}
As in the previous case, integrating the differential equation of \eqref{1.1} in  $[x_0,x_{n,1}]$, we
find that
\begin{equation}
\label{4.36}
\v_n'(x_0)-\v_n'(x_{n,1})-2s_n\int_{x_0}^{x_{n,1}}m'(x)\v_n'(x)\,dx=\int_{x_0}^{x_{n,1}}(\l_n-c(x))\v_n(x)\,dx
\end{equation}
for all $n \geq n_2$. Consequently, letting $n \rightarrow \infty$ in the right hand side of \eqref{4.36}, it follows from \eqref{4.25},
Theorem \ref{th3.1},  and \eqref{4.34} that
\begin{equation}
\label{4.37}
\lim_{n \rightarrow \infty}\int_{x_0}^{x_{n,1}}(\l_n-c(x))\v_n(x)\,dx=\int_{x_0}^{x_{\infty,1}}(\l_{\infty}-c(x))\v_{\infty}(x)\,dx
=\int_{x_0}^{x_{\infty,1}}(\l_{\infty}-c(x))\,dx>0.
\end{equation}
On the other hand, due to Theorem \ref{th3.1},
\begin{equation}
\label{4.38}
\lim_{n \rightarrow \infty}\left(\v_n'(x_0)-\v_n'(x_{n,1}) \right)=0.
\end{equation}
Thus, \eqref{4.36}, \eqref{4.37} and \eqref{4.38} imply that, for sufficiently large $n$,
\begin{equation*}
-2s_n\int_{x_0}^{x_{n,1}}m'(x)\v_n'(x)\,dx>0,
\end{equation*}
which contradicts \eqref{4.33} and \eqref{4.35}. Therefore, necessarily,
$$
  -1 < x_{\infty,1} <x_0< x_{\infty,2} < 1.
$$
Then, by \eqref{4.25}, there exists $n_3 \in \mathbb{N}$ such that
\begin{equation}
\label{u1}
x_{n,1}<x_0<x_{n,2} \;\; \hbox{for all}\;\; n\geq n_3.
\end{equation}
In this case, according to (Hm), we have that
\begin{equation}
\label{4.39}
m' \gneq 0 \;\; \hbox{in} \;\;  [x_{n,1},x_0],  \;\; \hbox{and} \;\;
m'\lneq 0 \;\; \hbox{in} \;\; [x_0,x_{n,2}].
\end{equation}
Also, as we are assuming that $c$ satisfies (Hc)(a), we have that
\begin{equation}
\label{4.40}
\l_{\infty}<c \;\; \hbox{in} \;\; [x_{\infty,1},x_{\infty,2}],\;\;\hbox{and}\;\; c<\l_\infty \;\;
\hbox{in}\;\; [-1,x_{\infty,1})\cup (x_{\infty,2},1].
\end{equation}
According to \eqref{4.40},
$$
   -\v_n''(-1)=(\l_n-c(-1))\v_n(-1)>0, \quad -\v_n''(1)=(\l_n-c(1))\v_n(1)>0.
$$
Thus, $\v_n''(-1)<0$ and $\v_n''(1)<0$ for sufficiently large $n$. Therefore, $\v_n$ has, at least, a critical point in $(-1,1)$; at least, a minimum. According to Theorem \ref{th2.1}(c)(i,ii), the critical points of
$\v_n$ must lie in $[x_{n,1},x_{n,2}]$. Thus, by Theorem \ref{th2.1}(c)(iii), we can conclude that  $\v_n$ possesses a unique critical point in $(-1,1)$ at some $z_n \in (x_{n,1},x_{n,2})$. Moreover, $z_n$ is a minimum. By the Bolzano--Weierstrass theorem, there exists a subsequence of $z_n$, again labeled by $n$, such that
\begin{equation}
\label{lz}
\lim_{n \rightarrow \infty}z_n=z_{\infty}\in[x_{\infty,1},x_{\infty,2}].
\end{equation}
Then, either $z_{\infty}<x_0$, or $z_{\infty}>x_0$, or $z_{\infty}=x_0$. Suppose that
$z_{\infty}<x_0$. Then, for sufficiently large $n$,
\begin{equation}
\label{4.41}
x_{n,1} <z_n<x_0.
\end{equation}
Since $[x_0,x_{\infty,2}] \subset [x_{\infty,1},x_{\infty,2}]$, \eqref{4.40} implies that
\begin{equation}
\label{4.42}
\l_{\infty}<c \;\; \hbox{in} \;\;  [x_0,x_{\infty,2}].
\end{equation}
Also, since $[x_0,x_{n,2}] \subset [z_n,x_{n,2}]$, by \eqref{4.41}, Theorem \ref{th2.1}(c)(iii) guarantees that
\begin{equation}
\label{4.43}
\v_n' \gneq 0 \;\; \hbox{in} \;\;  [x_0,x_{n,2}].
\end{equation}
Moreover, integrating the differential equation of \eqref{1.1} in $[x_0,x_{n,2}]$ yields to
\begin{equation}
\label{4.44}
\v_n'(x_0)-\v_n'(x_{n,2}) -2s_n\int_{x_0}^{x_{n,2}}m'(x)\v_n'(x)\,dx=\int_{x_0}^{x_{n,2}}(\l_n-c(x))\v_n(x)\,dx.
\end{equation}
As in all previous cases, letting $n \rightarrow \infty$ in the right hand side of \eqref{4.44},
and using \eqref{4.25} and Theorem \ref{th3.1}, it follows from \eqref{4.42} that
\begin{equation}
\label{4.45}
\lim_{n \rightarrow \infty}\int_{x_0}^{x_{n,2}}(\l_n-c(x))\v_n(x)\,dx=\int_{x_0}^{x_{\infty,2}}(\l_{\infty}-c(x))\v_{\infty}(x)\,dx
=\int_{x_0}^{x_{\infty,2}}(\l_{\infty}-c(x))\,dx<0.
\end{equation}
Similarly, by Theorem \ref{th3.1},
\begin{equation}
\label{4.46}
\lim_{n \rightarrow \infty}\left(\v_n'(x_0)-\v_n'(x_{n,2}) \right)=0.
\end{equation}
Therefore, according to \eqref{4.44}, \eqref{4.45} and \eqref{4.46}, we find that, for sufficiently large $n$, \begin{equation}
\label{4.47}
-2s_n\int_{x_0}^{x_{n,2}}m'(x)\v_n'(x)\,dx<0.
\end{equation}
As \eqref{4.47} contradicts \eqref{4.39} and \eqref{4.43}, necessarily $z_\infty\geq x_0$. Suppose that  $z_\infty > x_0$.  Then, working in the interval $[x_{n,1},x_0]$, instead of in $[x_0,x_{n,2}]$, a similar contradiction can be attaint as in the case when $z_\infty < x_0$. Therefore,
$z_\infty=x_0$.  In this case, there exists a subsequence of $z_n$, relabeled by $z_n$, such that either $z_n$ converges to $z_{\infty}=x_0$ from its right, or from its left. Assume that it converges to $z_{\infty}=x_0$ from its right, i.e. $z_n\geq z_\infty=x_0$ for sufficiently large $n$, and consider the interval
$$
   [x_{n,1},x_0]=[x_{n,1},z_\infty] \subset [x_{n,1},z_n].
$$
Since $[x_{\infty,1},x_0] \subsetneq [x_{\infty,1},x_{\infty,2}]$, by \eqref{4.40}, we have that
\begin{equation}
\label{4.48}
\l_{\infty}<c \;\; \hbox{in} \;\; [x_{\infty,1},x_0].
\end{equation}
Moreover, by construction,
\begin{equation}
\label{4.49}
\v_n'\lneq 0 \;\; \hbox{in} \;\; [x_{n,1},x_0]=[x_{n,1},z_\infty] \subset [x_{n,1},z_n].
\end{equation}
Now, integrating the differential equation of \eqref{1.1} in  $[x_{n,1},x_0]$ yields
\begin{equation}
\label{4.50}
\v_n'(x_{n,1})-\v_n'(x_0)-2s_n\int_{x_{n,1}}^{x_0}m'(x)\v_n'(x)\,dx=
\int_{x_{n,1}}^{x_0}(\l_n-c(x))\v_n(x)\,dx.
\end{equation}
Letting $n \rightarrow \infty$ in the right term of \eqref{4.50} it follows from \eqref{4.25}, Theorem \ref{th3.1} and \eqref{4.48} that
\begin{equation}
\label{4.51}
\lim_{n \rightarrow \infty}\int_{x_{n,1}}^{x_0}(\l_n-c(x))\v_n(x)\,dx=
\int_{x_{\infty,1}}^{x_0}(\l_{\infty}-c(x))\v_{\infty}(x)\,dx=\int_{x_{\infty,1}}^{x_0}(\l_{\infty}-c(x))\,dx<0.
\end{equation}
As usual,  due to Theorem \ref{th3.1},
\begin{equation}
\label{4.52}
\lim_{n \rightarrow \infty}\left(\v_n'(x_{n,1})-\v_n'(x_0) \right)=0.
\end{equation}
Consequently, according to \eqref{4.50}, \eqref{4.51} and \eqref{4.52}, we find that, for sufficiently large $n$,
\begin{equation*}
-2s_n\int_{x_{n,1}}^{x_0}m'(x)\v_n'(x)\,dx<0,
\end{equation*}
which contradicts \eqref{4.39} and \eqref{4.49}. This contradiction shows that $z_n$ must converge to
$z_\infty=x_0$ from its left, i.e. $z_n \leq x_0$ for sufficiently large $n$. In such case, we can make
the previous integration in the interval $[x_0,x_{n,2}]$, instead of in $[x_{n,1},x_0]$, to reach a
similar contradiction. Therefore, as we have got a contradiction in all previous cases, the proof
is completed in Case 2.
\par
\vspace{0.2cm}
\noindent\emph{Case 3.} Assume that $c(x)=\lambda_{\infty}$ has a unique root $x_{\infty} \in (-1,1)$ which is not transversal. Then, since we are assuming that $c(x)$ satisfies (Hc)(a), it is apparent that $x_\infty=y_0$ and, owing to \eqref{4.5},
$$
  \l_\infty=c(y_0)\neq c(x_0).
$$
Thus,  $y_0\neq x_0$. Now, suppose that, in addition, $c(x)$ has a maximum at $y_0$. Then, by Lemma \ref{le2.1},  $\lambda_n < \lambda_{\infty}$ for all $n \geq 1$ and $\lambda_n=c(x)$ has two transversal roots, $x_{n,1}$ and $x_{n,2}$, such that
$$
\lim_{n \rightarrow \infty} x_{n,1}=\lim_{n \rightarrow \infty} x_{n,2} =x_\infty=y_0,
$$
and
$$
  x_{n,1}<y_0=x_{\infty}<x_{n,2}\;\;\hbox{and}\;\; c(x_{n,1})=c(x_{n,2})=\lambda_n \;\; \hbox{for all}\;\; n \geq 1.
$$
Since $x_0\neq y_0$, the proof can be completed adapting the argument of Case 2. By repetitive, we are
omitting the technical details here. Similarly, one can get a contradiction when $y_0$ is a minimum of $c(x)$.
\par
\vspace{0.2cm}
\noindent\emph{Case 4.} Suppose that $c(x)=\lambda_{\infty}$ has no roots in $(-1,1)$ and that $y_0$ is a maximum of $c(x)$. In this case, by Lemma \ref{le2.1}, some of the following options occurs. Either
\begin{equation}
\label{4.53}
\lambda_{\infty}=c(-1) \;\; \hbox{and} \;\; c(x) > \lambda_{\infty} \;\; \hbox{for all} \;\; x \in (-1,1],
\end{equation}
or
\begin{equation}
\label{4.54}
\lambda_{\infty}=c(1) \;\; \hbox{and} \;\; c(x) > \lambda_{\infty} \;\; \hbox{for all} \;\; x \in [-1,1),
\end{equation}
or
\begin{equation}
\label{4.55}
\lambda_{\infty}=c(-1)=c(1) \;\; \hbox{and} \;\; c(x) > \lambda_{\infty} \;\; \hbox{for all} \;\; x \in (-1,1).
\end{equation}
Assume that \eqref{4.53} holds. Then, $c_L=c(-1)$ and, since $\lim_{n \rightarrow \infty}\lambda_n=\lambda_{\infty}$,  it follows from Lemma \ref{le2.1} that $\lambda_n>\lambda_{\infty}$ for all $n \geq 1$. Moreover, since $c(1)>c(-1)=c_L$, the equation $c(x)=\l_n$ has a unique transversal root,
$x_{n}\in(-1,1)$, for sufficiently large $n$. By construction,
$$
\lim_{n \rightarrow \infty}x_n=-1 \;\; \hbox{and} \;\; c(x)>\lambda_n \;\; \hbox{for all} \;\;
x\in (x_n,1].
$$
In particular, for  sufficiently large $n$, we have that
$$
-1<x_n<x_0 \;\; \hbox{and} \;\; c(x)>\lambda_n \;\; \hbox{for all} \;\; x \in (x_n,x_0].
$$
Thus, working in $[x_n,x_0]$ with the arguments of Case 1, we can reach a contradiction.
This ends the proof under the assumption \eqref{4.53}. Similarly, one can get a contradiction when
\eqref{4.54} holds by integrating the differential equation of \eqref{1.1} in the intervals
$[x_0,x_n]$ for sufficiently large $n$. By repetitive, we are omitting the technical details here.
\par
Finally, assume that \eqref{4.55} holds. Then, since
$$
   \lim_{n \rightarrow \infty}\lambda_n=\lambda_{\infty}\;\; \hbox{and}\;\;  c_L=c(-1)=c(1),
$$
it follows from Lemma \ref{le2.1} that $\lambda_n>\lambda_{\infty}$ for all $n \geq 1$. Moreover, there are two sequences, $x_{n,1}$ and $x_{n,2}$, $n\geq 1$, with $-1<x_{n,1}<x_{n,2}<1$, where $x_{n,1}$ and $x_{n,2}$ are the unique roots of the equation $c(x)=\lambda_n$, which are transversal roots, such that
$$
\lim_{n \rightarrow \infty}x_{n,1} =-1, \;\;
\lim_{n \rightarrow \infty}x_{n,2} =1, \;\; \hbox{and} \;\;
c(x)>\lambda_n \;\; \hbox{for all} \;\; x\in (x_{n,1},x_{n,2}).
$$
In particular,
$$
-1<x_{n,1} <x_0<x_{n,2}<1 \;\; \hbox{for sufficiently large}\;\; n\geq 1.
$$
Adapting the argument already given in the third situation analyzed in Case 2, a contradiction can be
as well reached. A similar contradiction can be reached when $y_0$ is a minimum of $c(x)$, though we omit
the details here by repetitive. This completes the proof of the theorem in Case 4.
\par
When $c(x)$ is monotone, the equations $c(x)=\l_n$ always have a unique transversal root. Thus, in
such case,  the proof can be easily completed adapting the arguments of Case 1 under assumption (Hc)(a)
if $x_\infty\in (-1,1)$.  When $x_\infty =\pm 1$, one should adapt the arguments already given in Case 4 under assumption (Hc)(a) to deliver the complete details of the proof, that we omit here by repetitive.
This concludes the proof.
\end{proof}

\section{Numerical Simulations}

\noindent The pseudospectral method used in this paper has been  previously used to continue branches of solutions for a non  local problem in \cite{HLGM}, for nodal solutions  in \cite{LGMR},  for metacoexistence states of a  Lotka--Volterra diffusive competition system with refuges in \cite{FM3}, as well as to calculate metasolutions in the disk in \cite{FM1}.  In \cite{FM2}, there have been calculated the differentiation matrices  in an interval and in  the disk,  subject to  nonhomogeneous  Dirichlet, Neumann and Robin boundary conditions, as well as for the Biharmonic Equation subject to nonhomogeneous Dirichlet boundary conditions.
\par
Eigenvalue boundary value problems are crucial  in  innumerable applications, as,  for example, in the detection of bifurcation points in nonlinear equations and systems. Problem \eqref{1.1} is a key model to solve a wide variety of nonlinear problems. Concerning the relation between eigenvalue problems, Chebyshev Methods and applications,  one of the most important examples  is the Orr--Sommerfeld equation,  an eigenvalue problem for a fourth order differential equation that has been used to test  the  efficiency of  spectral methods  (see \cite{CMAT} pp. 193--200). A  pioneering work to solve this problem was \cite{Orzag}, where the critical Reynolds number was calculated numerically using a Tau Chebyshev method.
\par
Pseudospectral methods are very appreciated because of its unrivalled  convergence properties, either as \emph{spectral accuracy}, also called \emph{infinite-order convergence} when the solution is an  infinitely differentiable function, or \emph{exponential convergence}, in the case that the solution is an analytic function (see Chapters 7 and 8 of \cite{Tre3}, or  page 47 of \cite{CMAT}). Particularly, for solutions  having finite regularity,   global error estimates   can be found  in  Chapter 7 of \cite{CMAT} or Section 5.1.2 of \cite{CMAT}.
\par
The pseudospectral method  used in this paper consists of two steps:
\begin{enumerate}
\item[$\ast$] Firstly, to approximate the first eigenfunction $ {\varphi_{s}}$  by its
interpolation polynomial ${\varphi_{s,N}}(x)$  of degree $N$ based on the collocation points of Chebyshev--Gauss--Lobatto (CGL),
$$
    x_i  =  \cos \left( \dfrac{(i-1) \> \pi}{N}\right), \quad i=1,...,N+1.
$$

\item[$\ast$] Secondly, to obtain an algebraic eigenvalue problem by forcing  $ \left( {\lambda_{s,N}}, {\varphi_{s,N}}(x)\right)$ to satisfy the linear equation in \eqref{1.1} at the collocation points
$x_i$, $i=2,...,N$,  and the boundary conditions on $x_1$ and $x_{N+1}$.
\end{enumerate}
Then, the  eigenvalue and eigenvector
$$
    \left( {\lambda_{s,N}}, {\varphi_{s,N}}(x_i)_{i=1}^{i=N+1} \right)
$$
are calculated applying the MATLAB command \emph{eigs} to the differentiation matrix obtained applying the pseudospectral method to \eqref{1.1}. The implementation of   the boundary conditions in the differentiation matrix follows \cite{FM2}.
\par
In this section, the results of  the simulations are discussed for eight different eigenvalue boundary value problems:  A1, A2, A3,  B1, B2, B3, B2A and B2B, whose coefficients $m(x)$ and $c(x)$ are listed in Table \ref{appsol} and $c(x)$ are shown in Figure \ref{FigM1}, with
$$
  c_1(x):=40(2-\cos(x-1/3)), \qquad x\in [-1,1].
$$
In each one of these cases,  for  $N=801$, Table \ref{appsol} collects the values of $c_M-c_L$, $s$, $\|{\varphi_{s,N}}-1 \|_\infty$, $\| {\varphi^{\prime}_{s,N}} \|_\infty,$ ${ \lambda_{s,N}}-c(0)$ and the condition  number, $ \kappa({\lambda_{s,N} })$, of computing the eigenvalue $\lambda_{s,N}$. Because of the presence of the  advection term $-2s m^{\prime}(x)$ in \eqref{1.1}, it is needed a further specific study to make sure  that the problem is well conditioned (see Section 4.3.3 of  \cite{CMAT}, or \cite[pp. 83--88]{P}).  For this reason we are including in Table \ref{appsol} the condition numbers $\kappa({\lambda_{s,N} })$, which has been calculated from the definition in page 474  of \cite{Tre2}.
\par

\begin{table} [h] 
\begin{center}
\begin{tabular}{|l|l|l|c|c|c|c|c|c|c|c|c|}
\hline

\multicolumn{9}{|c|}{$ {\varphi_s}(x) \approx  {\varphi_{s,N}}$, ${\lambda_{s} }\approx{\lambda_{s,N}}$, N=801} \\
\hline
Ex. & $m(x)$ &  $c(x)$ & \!$c_M\!-\!c_L$\! &  $s$  & $\!\|{\varphi_{s,N}}\!-\!1 \|_\infty\!$ &  $\| {\varphi^{\prime}_{s,N}}\|_\infty$ & \! ${\lambda_{s,N}}\!-\!c(0)$ \! & $\kappa({\lambda_{s,N} })$ \\
\hline
A1 & $1-x^4$  & $c_1(x)$  & 40.59  &  $10^6$ & 3.0 e-04 & 4.0 e-03 & 4.5 e-03 &  7.4 \\
\hline
A2 & $1-x^{10}$  &  $c_1(x)$ & 40.59  &  $10^6$ &1.1  e-01 &  3.2 e-01 & 2.6 e-01 & 2.55  \\
\hline
A3 & $1-x^{16}$  &  $c_1(x)$ & 40.59  &  $10^6$ & 4.0 e-01 & 7.6 e-01 & 4.7 e-01 & 1.94 \\
\hline
B1 & $1-x^4$  &  $2+x$  & 2& $10^4$  &  6.5 e-04 & 3.2 e-03 & -6.1 e-06  & 4.16 \\
\hline
B2 & $1-x^{10}$  &  $2+x$  & 2 &  $10^4$ &3.2 e-02 & 6.3 e-02 & -2.2 e-03 & 2.02 \\
\hline
B3 & $1-x^{16}$  &  $2+x$  & 2 &  $10^4$ & 9.3 e-02& 1.3 e-01 & -1.0 e-02 & 1.66 \\
\hline
B2A &  $1-x^4$  &  $2+0.1 x$  &  0.2 & $10^4$ & 6.5 e-05& 3.2 e-04 & -6.1 e-08 & 4.16 \\
\hline
B2B& $1-x^{4}$  &  $2+10 x$  & 20 &  $10^4$ & 6.5 e-03&3.2 e-02 & -6.1 e-04 & 4.16 \\
\hline
\end{tabular}
\caption{ {\small  Eight cases of numerical study, with $c_1(x):=40(2-\cos(x-1/3))$ }}
\label{appsol}
\end{center}
\end{table}

Figures  \ref{FigM1} and  \ref{FigM2} show the plots of the coefficients $c(x)$  and ${ \lambda_{s,N}}-c(0)$, respectively. The first row of Figure \ref{FigM3} shows a zoom of the advection terms $-2s m^{\prime}(x)$ for values of $s\in [10^{-2},10^6]$  for  $m(x)=1-x^4$  (left), $m(x)=1-x^{10}$ (centre),  and  $m(x)=1-x^{16}$ (right). The second row shows the corresponding zooms for values of $s\in [10^{-2},10^4]$ for  $m(x)=1-x^4$  (left),  $m(x)=1-x^{10}$ (centre),  and  $m(x)=1-x^{16}$ (right). Finally, Figures \ref{FigM4}, \ref{FigM5} and \ref{FigM6} show the plots of some of the computed eigenfunctions for each of the cases collected in Table
\ref{appsol}, together with the plots of their respective derivatives.

\begin{figure}[h!]
\begin{center}
\begin{tabular}{lll}
\includegraphics[width=0.31\columnwidth] {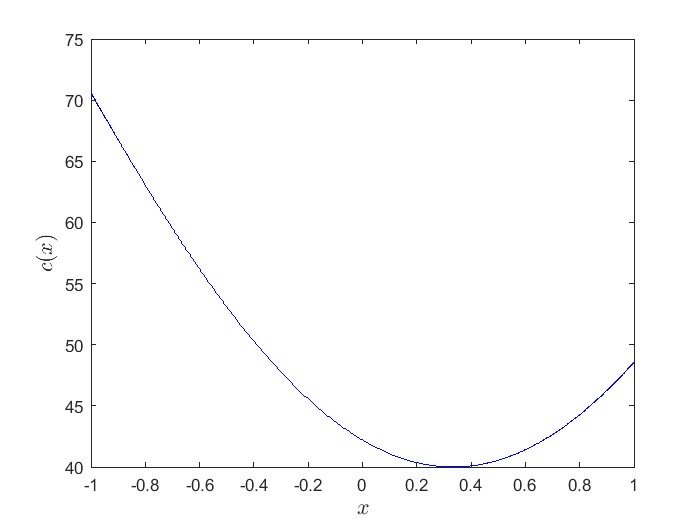}
& \includegraphics*[width=0.31\columnwidth]{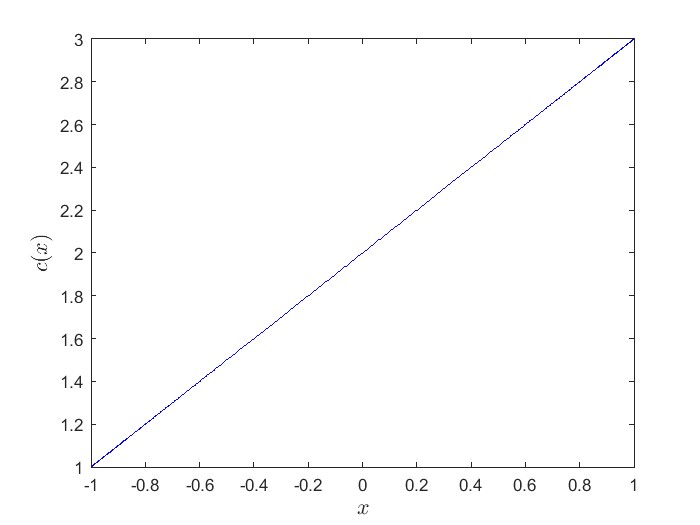}
& \includegraphics*[width=0.31\columnwidth]{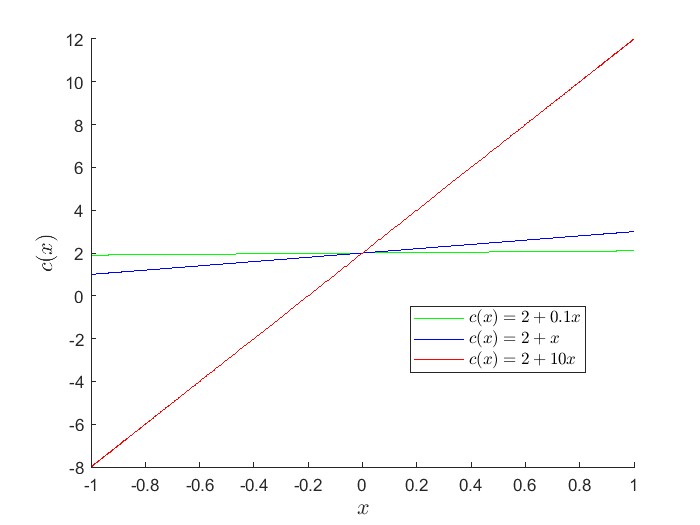}
\end{tabular}
\end{center}
\caption{  \small  Graphs  of  the coefficients $c_1(x)=40(2-\cos(x-1/3))$ (left),   $c_2(x)=2+x$ (centre), and $c_3(x)=2+m x$  (right) for the slopes  $m=0.1, m= 1$ and $m=10$.}
\label{FigM1}
\end{figure}

\begin{figure}[h!]
\begin{center}
\begin{tabular}{lll}
\includegraphics[width=0.31\columnwidth] {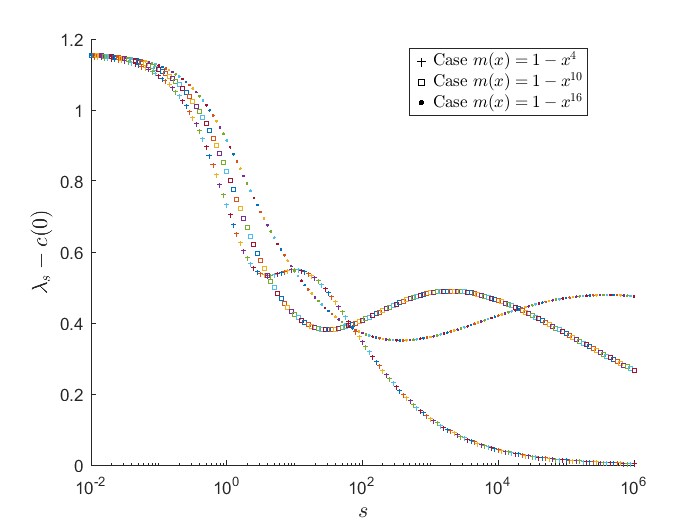}
& \includegraphics*[width=0.31\columnwidth]{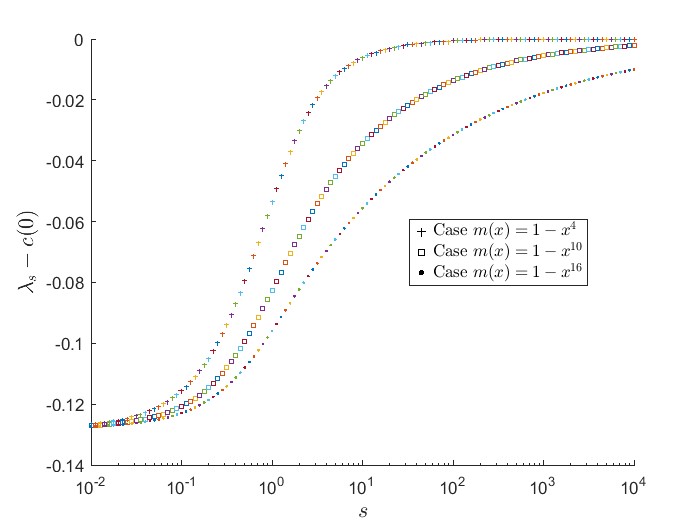}
& \includegraphics*[width=0.31\columnwidth]{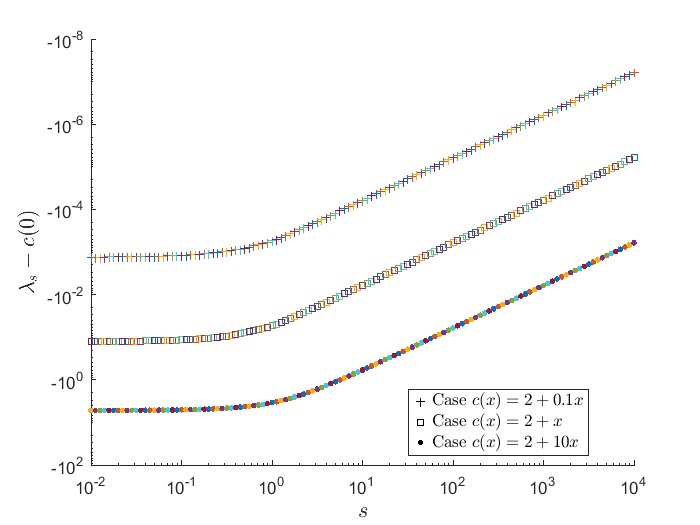}
\\
\end{tabular}
\end{center}
\caption{  \small  Graphs  of $\l_{s,N}-c(0)$, with $\l_s=\l(s)$,  for examples A1, A2 and A3 (left);  for examples B1,B2 and B3 (centre), and for examples B1, B2A and B2B (right).}
\label{FigM2}
\end{figure}

At the light shared by our numerical study, some conclusions come over:
\begin{enumerate}
 \item[a)] Fixed $ c(x)$,  the condition number  $\kappa({\lambda_{s,N} })$ decreases as the power of $x$ in $m(x)$, $n=2\nu$, increases. This occurs because $m'(x)$ achieves smaller values and consequently the influence of the advection term is smaller.
 \vspace{0.2cm}
 \par
 \item[b)] Similarly, fixed $c(x)$,   the convergence of $\lambda_{s,N}$ to $c(0)$  slows down as $n=2\nu$ increases. This occurs because a bigger $s$ is needed  to catch the information of the advection   term $-2s m'(x)$  at zero, as illustrated by Figure \ref{FigM3}.
 \vspace{0.2cm}
 \par
 \item[c)] Fixed  $n=2\nu$, the convergence of $\lambda_{s,N}$ to $c(0)$  slows down  as  the variation $c_M-c_L$ of $c(x)$ in $[-1,1]$ increases. This may be caused by the integral term depending of $c(x)$ in the Rayleigh Quotient.
 \vspace{0.2cm}
 \par
 \item[d)] Taking into account the condition numbers $\kappa({\lambda_{s,N} })$ listed in the last column of Table \ref{appsol}, it is concluded that the numerical simulations of this paper are robust,  and that  rounding errors do not affect the numerical results of this paper.
 \vspace{0.2cm}
 \par
 \item[e)] In the cases A1, A2 and A3, the highest value of $s$ is  $s=10^6$, while in cases B1, B2, B3, B2A and B2B the highest $s$ is  $ s=10^4$. Allowing $s$ to be bigger than these  upper bounds,
     makes appear instabilities in  the calculation of  ${\lambda_{s,N}}$  that may push us to increment the value of $N$, but in this paper the number of collocation points is fixed at $N+1=802$ in all simulations.
\end{enumerate}

\begin{figure}[h!]
\begin{center}
\begin{tabular}{lll}
\includegraphics[width=0.21\columnwidth] {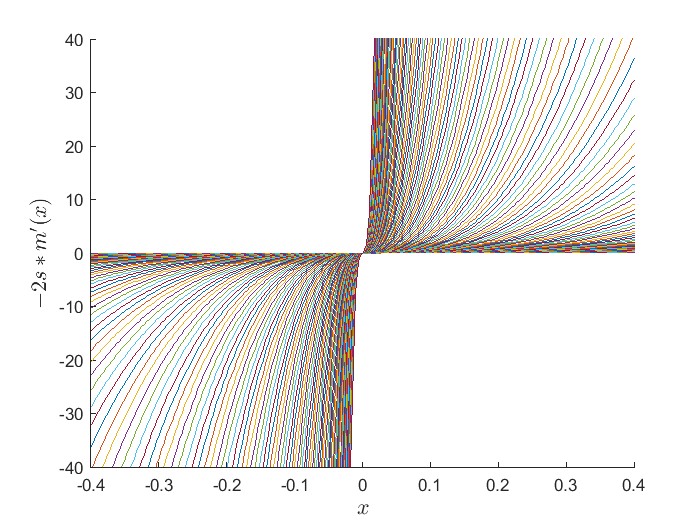}
& \includegraphics*[width=0.21\columnwidth]{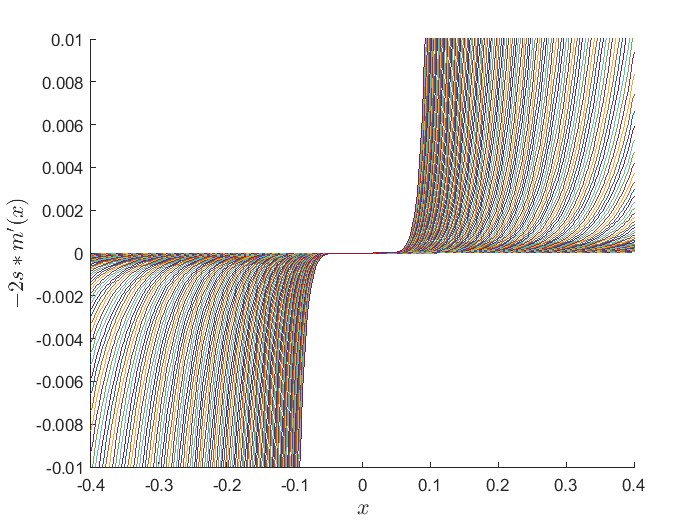}
& \includegraphics*[width=0.21\columnwidth]{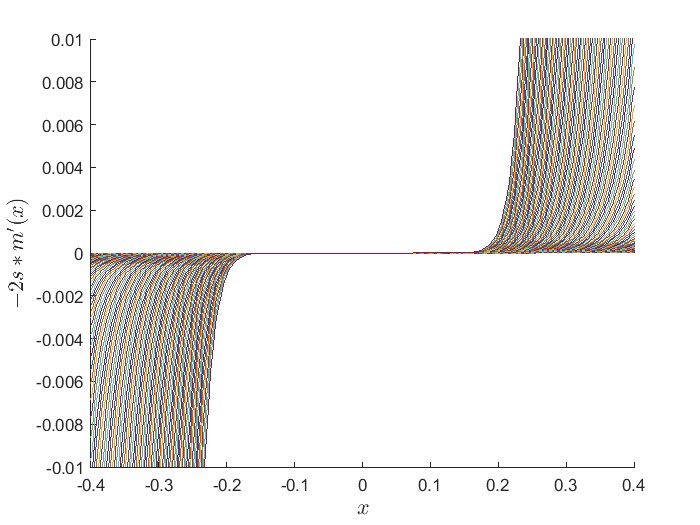}
\\
\includegraphics[width=0.21\columnwidth] {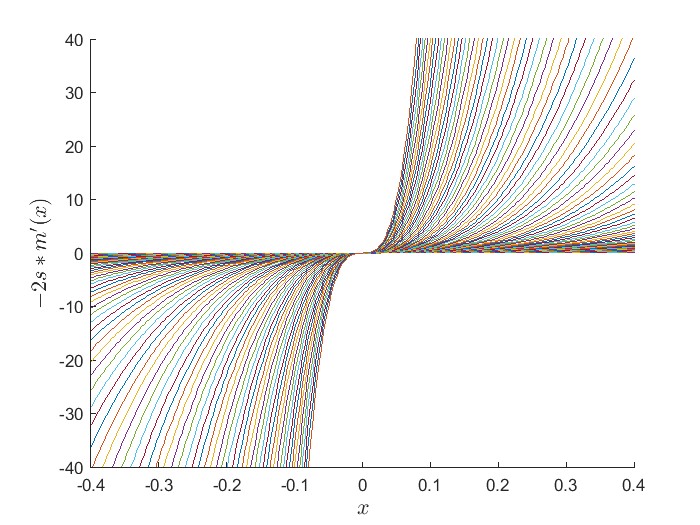}
& \includegraphics*[width=0.21\columnwidth]{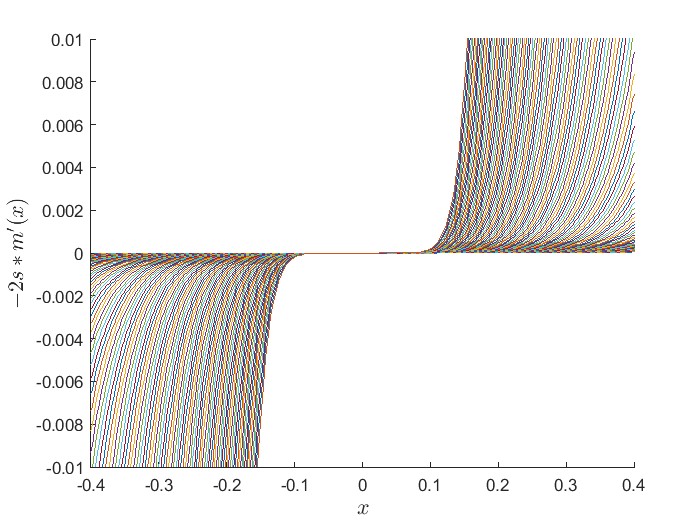}
& \includegraphics*[width=0.21\columnwidth]{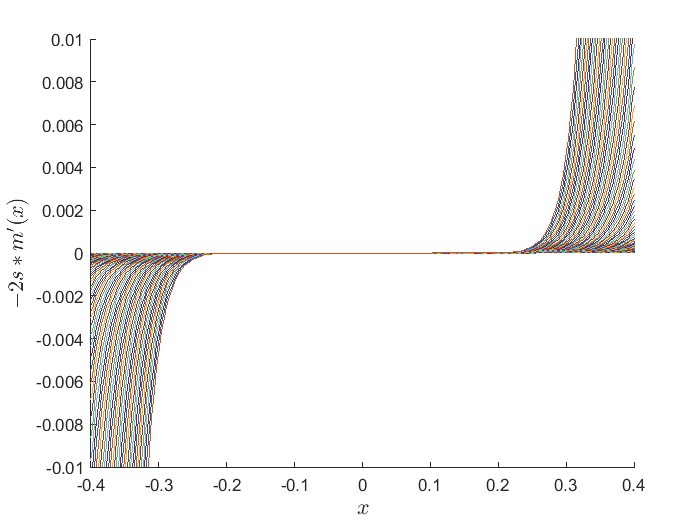}
\end{tabular}
\end{center}
\caption{ { \small Zoom of the  advection  terms $-2s m^{\prime}(x)$, for values of $s\in [10^{-2},10^6]$  for  $m(x)=1-x^4$  (left), 	$m(x)=1-x^{10}$ (centre),  and  $m(x)=1-x^{16}$ (right) (top) and zoom of the  advection  terms $-2s m^{\prime}(x)$, for values of $s\in [10^{-2},10^4]$  for  $m(x)=1-x^4$  (left),  $m(x)=1-x^{10}$ (centre),  and  $m(x)=1-x^{16}$ (right) (bottom).}}
\label{FigM3}
\end{figure}

\begin{figure}[h!]
\begin{center}
\begin{tabular}{lll}
\includegraphics[width=0.21\columnwidth] {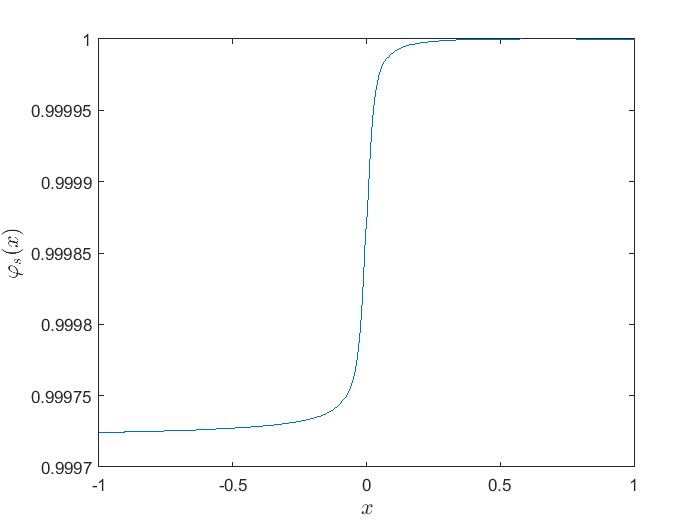}
& \includegraphics*[width=0.21\columnwidth]{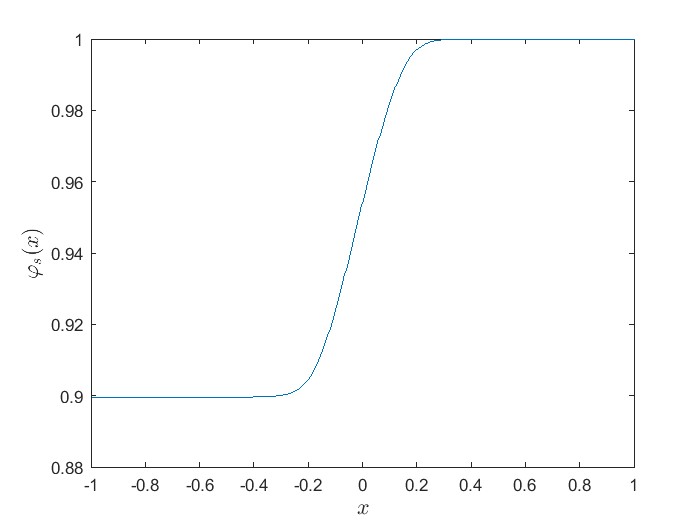}
& \includegraphics*[width=0.21\columnwidth]{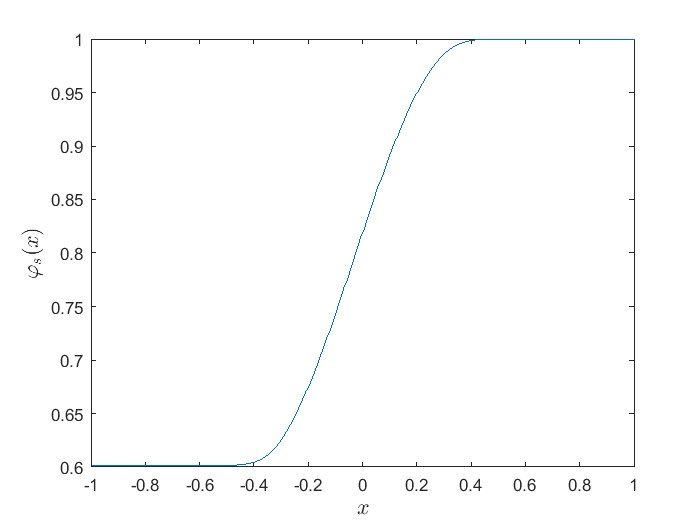}
\\
\includegraphics[width=0.21\columnwidth] {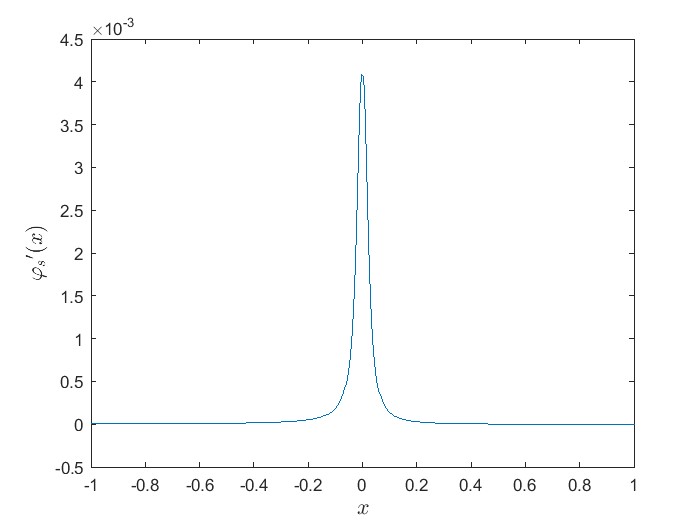}
& \includegraphics*[width=0.21\columnwidth]{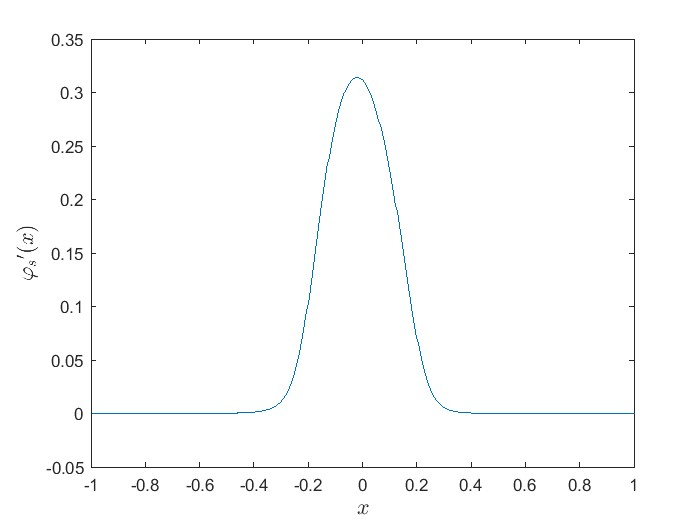}
& \includegraphics*[width=0.21\columnwidth]{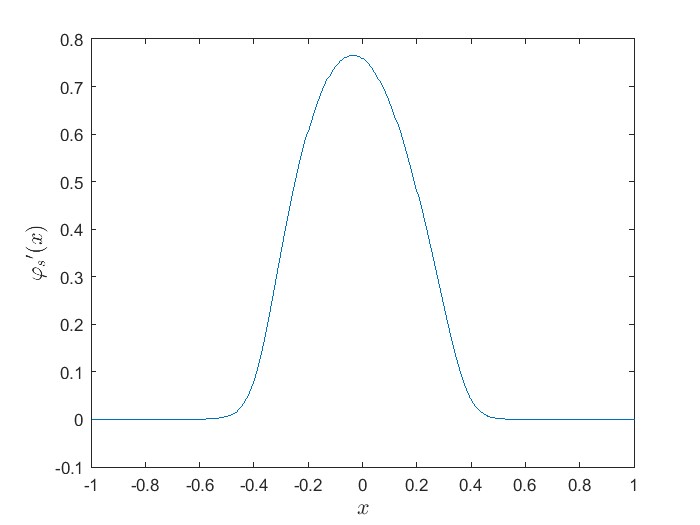}
\end{tabular}
\end{center}
\caption{ { \small Examples A1, A2 and A3: Plots of the normalized eigenfunctions (top) and its derivatives (bottom).}}
\label{FigM4}
\end{figure}

\begin{figure}[h!]
\begin{center}
\begin{tabular}{lll}
\includegraphics[width=0.21\columnwidth] {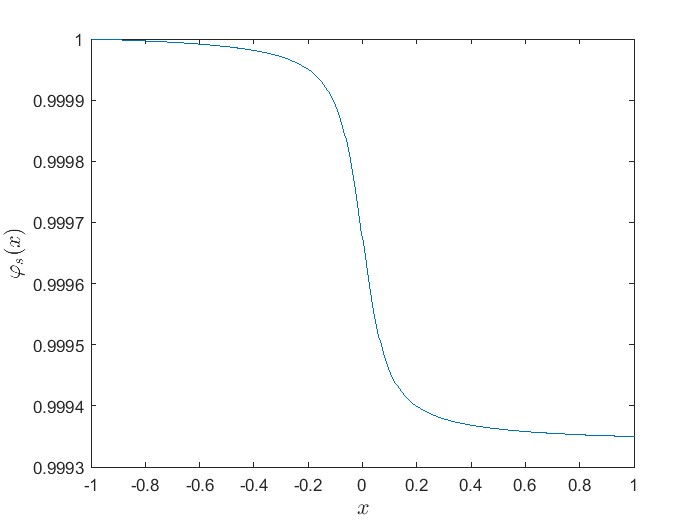}
& \includegraphics*[width=0.21\columnwidth]{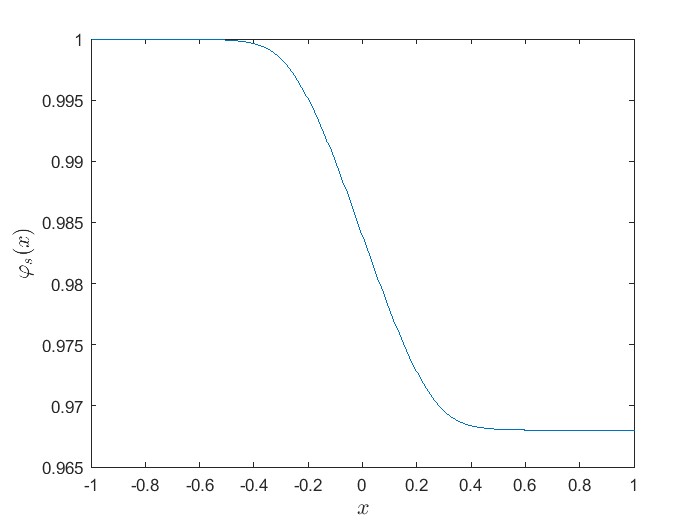}
& \includegraphics*[width=0.21\columnwidth]{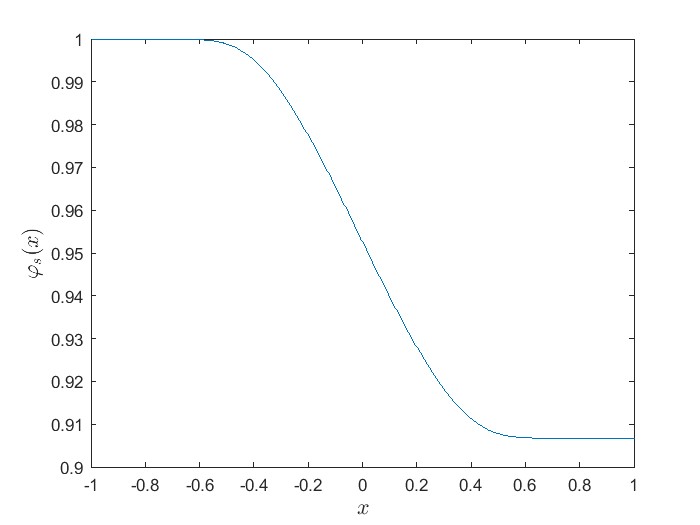}
\\
\includegraphics[width=0.21\columnwidth] {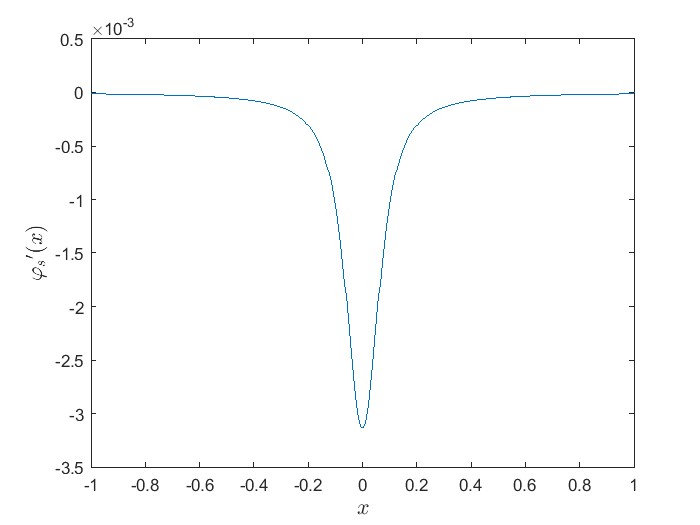}
& \includegraphics*[width=0.21\columnwidth]{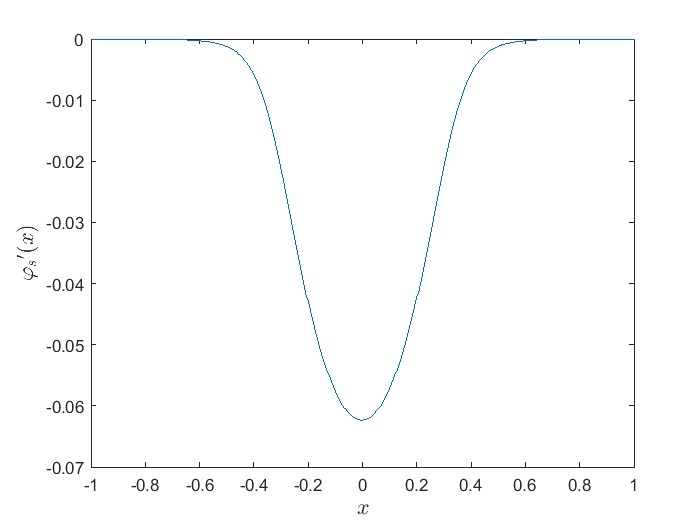}
& \includegraphics*[width=0.21\columnwidth]{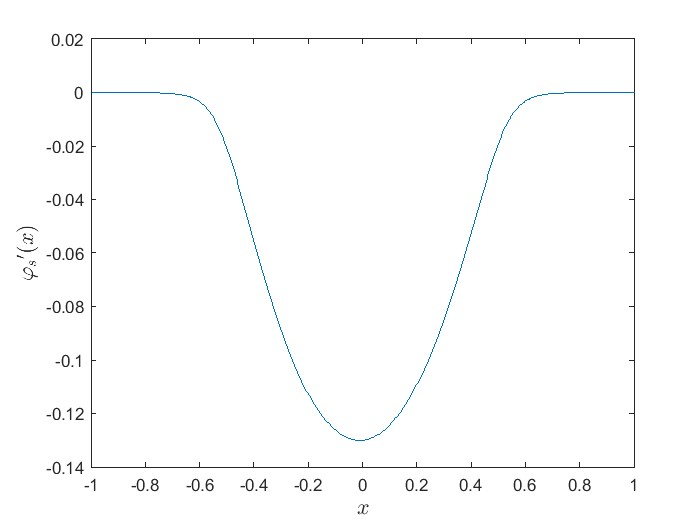}
\end{tabular}
\end{center}
\caption{ { \small Examples B1, B2 and B3: Plots of the normalized eigenfunctions (top) and its derivatives (bottom).}}
\label{FigM5}
\end{figure}

\begin{figure}[h!]
\begin{center}
\begin{tabular}{lll}
\includegraphics[width=0.21\columnwidth] {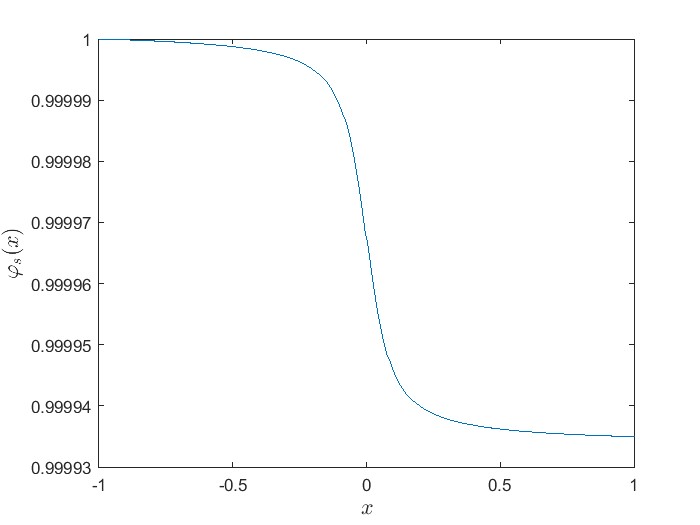}
& \includegraphics*[width=0.21\columnwidth]{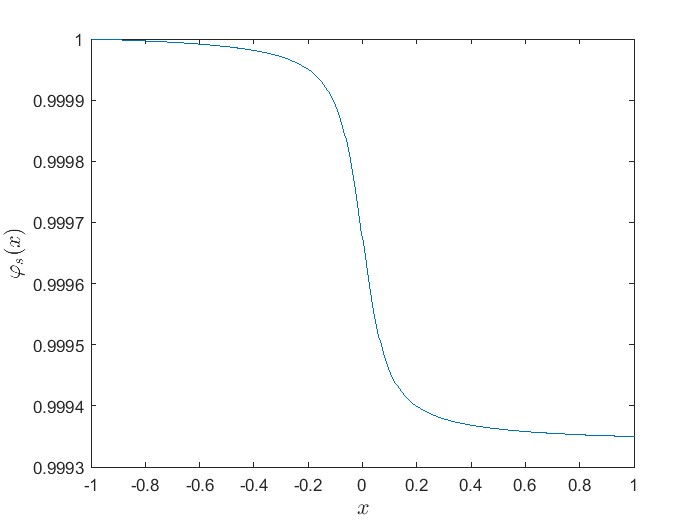}
& \includegraphics*[width=0.21\columnwidth]{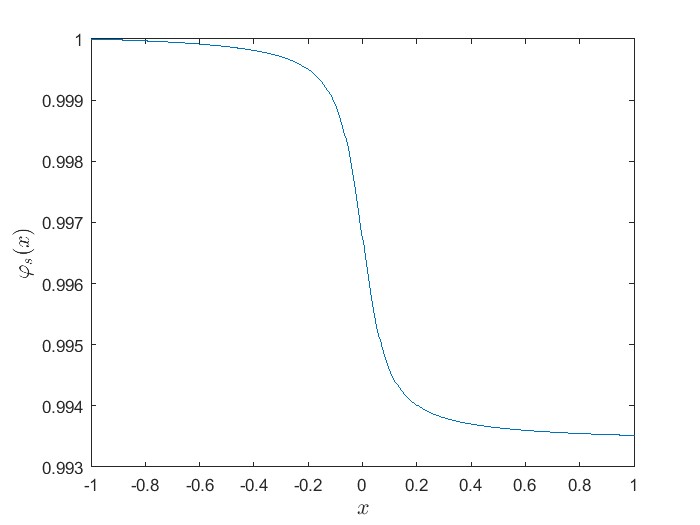}
\\
\includegraphics[width=0.21\columnwidth] {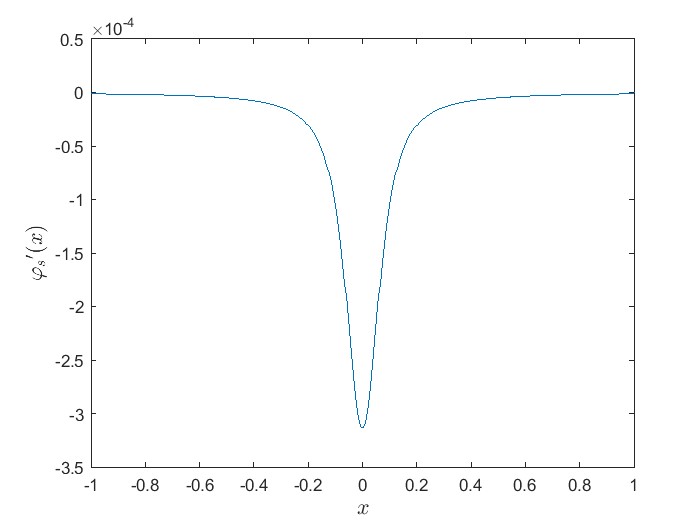}
& \includegraphics*[width=0.21\columnwidth]{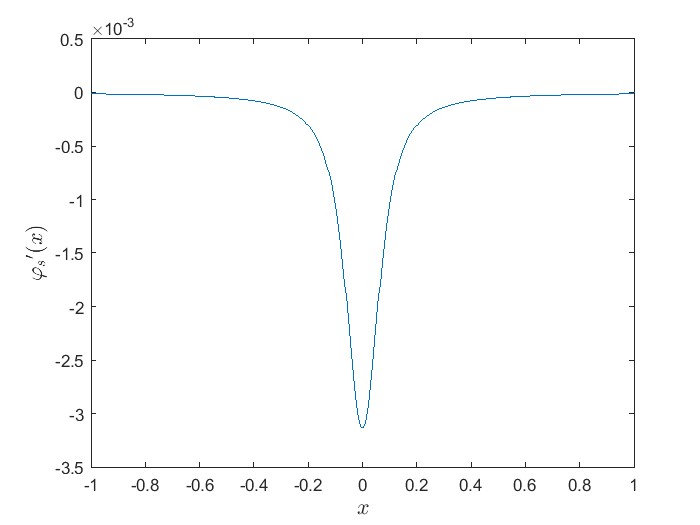}
& \includegraphics*[width=0.21\columnwidth]{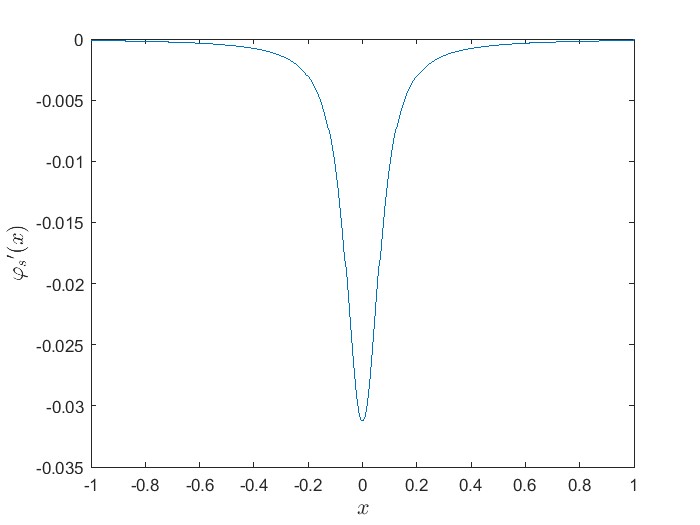}
\end{tabular}
\end{center}
\caption{ { \small Examples B2A, B1 and B2B: Plots of the normalized eigenfunctions (top) and its derivatives (bottom).}}
\label{FigM6}
\end{figure}

\newpage

\end{document}